\newtheorem{theorem}{Theorem}[section]
\newtheorem{lemma}[theorem]{Lemma}
\newtheorem{corollary}[theorem]{Corollary}
\newtheorem{remark}[theorem]{Remark}
\numberwithin{equation}{section}
\title{\bf %The behaviour of
Star-shaped Centrosymmetric Curves under Gage's Area-preserving Flow}
\author{ \bf  Laiyuan Gao~~~~Shengliang Pan}
\date{}
\begin{document}
\maketitle
\noindent{\bf Abstract}
It is proved that Gage's area-preserving flow can evolve a centrosymmetric %and
star-shaped initial curve smoothly, make it convex in a finite time and deform it into
a circle as time tends to infinity.
%with an initial centrosymmetrically embedded and star-shaped curve exists globally,  drives the evolving curve being
\\\\
\noindent {\bf Keywords} {curve shortening flow, area-preserving flow, star-shaped}

\noindent {\bf Mathematics Subject Classification (2000)} {35C44, 35K05, 53A04}\\\\

\section{Introduction}

A regular closed embedded plane curve %in the Euclidean plane
can be parameterized as $X: S^1\rightarrow
\mathbb{R}^2, \varphi\mapsto (x(\varphi), y(\varphi))$. If the total length of $X$ is $L$, $X$ is usually
parameterized as $X: [0, L] \rightarrow\mathbb{R}^2, s\mapsto (x(s), y(s))$, where $s$ is the arc length parameter.
%Let $X_0: S^1\rightarrow \mathbb{R}^2 (\varphi\mapsto (x, y))$ be a smooth, closed and embedded curve in the
%Euclidean plane. Let $s\in [0, L]$ be the arc length parameter, where $L$ is the length of the curve.
Denote by $T(s)$ the unit tangential vector at a point $X(s)$ and by $N(s)=N_{in}(s)$ the inward unit
normal vector such that every ordered pair $(T(s), N(s))$ determines a positive orientation of the plane.
The curve $X$ is called \textbf{star-shaped} if there exists a point $O$ inside the region bounded by $X$
such that
$$\det (X(s), T(s)) = \begin{vmatrix}
x(s) & y(s) \\
x^{\prime}(s) & y^{\prime}(s)
\end{vmatrix} >0$$
for
every $s$, %in the Cartesian coordinate system $(x, y; O)$.
and the point $O$ is called a star center of the curve. If $X$ is a $C^2$ curve then its (relative) curvature is defined as
$$\kappa(s) := \left\langle \frac{\partial T}{\partial s}(s), ~N(s)\right\rangle.$$

%Let $X_0$ be a star-shaped smooth curve in the plane.
In this paper, we will investigate the evolution behaviour of a star-shaped
centrosymmetric plane curve under Gage's {\bf area-preserving flow (GAPF)} \cite{Gage-1986},
that is to say, we will consider the following Cauchy problem
\begin{equation}\label{eq:1.1.201909}
\left\{\begin{array}{l}
\frac{\partial X}{\partial t}(\varphi, t) =
\left(\kappa-\frac{2\pi}{L}\right)N \ \ \ \text{in} \ \ S^1\times (0, \omega),\\
X(\varphi, 0)= X_0(\varphi) \ \  \ \ \ \  \text{on} \ \ S^1,
\end{array} \right.
\end{equation}
where $X: S^1\times [0, \omega)\rightarrow \mathbb{R}^2 ((\varphi, t)\mapsto (x, y))$ is a family of
closed curves with $X_0$ being star-shaped and centrosymmetric about the origin $O$ of the plane,
$\kappa=\kappa(\varphi, t)$ the curvature and
$L=L(t)$ the length of $X(\cdot, t)$. In 1984, Gage (\cite{Gage-1986}) proved that the evolving
curve $X(\cdot, t)$ under this flow %(\ref{eq:1.1.201909})
can be deformed into a circle if the initial
curve $X_0$ is convex. As Gage pointed out that there is a kind of simple closed curves which may
develop singularities in a finite time under the flow (\ref{eq:1.1.201909}), which is verified by
Mayer's numerical experiment (see \cite{Mayer-2001}). % did a  to support Gage's claim.
So, unlike Grayson's theorem \cite{Grayson-1987} for the {\bf curve shortening flow (CSF)},
there is no convergence result of GAPF for generic simple closed initial curves.
A natural question is whether there is a class of non-convex initial curves which may become convex and
converge to a circle under the flow (\ref{eq:1.1.201909}).
To guarantee such a convergence, referring to Gage's example (see \cite{Gage-1986, Mayer-2001}),
we should assume that the initial curve $X_0$ can not concave wildly. In this paper,
we consider $X_0$ to be star-shaped %condition of  is considered
and centrosymmetric and obtain the following main result:
\begin{theorem}\label{thm:1.1.201909}
Let $X_0$ be a smooth, embedded  and star-shaped curve in the plane.
If $X_0$ is centrosymmetric then Gage's area-preserving flow (\ref{eq:1.1.201909}) with this
initial curve exists globally and makes the evolving curve convex in a finite time
and deforms it into a circle as time tends to infinity.
\end{theorem}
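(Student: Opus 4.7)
The plan is to exploit the hypothesis that $X_0$ is star-shaped about the origin by passing to the polar parameterization $X(\theta,t)=r(\theta,t)(\cos\theta,\sin\theta)$, in which the flow (\ref{eq:1.1.201909}) becomes a scalar quasilinear parabolic equation for $r$ on the circle, with short-time existence of a smooth solution standard. Two structural properties must then be maintained along the flow. Centrosymmetry, i.e.\ $r(\theta+\pi,t)=r(\theta,t)$, follows immediately from uniqueness of the Cauchy problem, because the right-hand side of (\ref{eq:1.1.201909}) is equivariant under $X\mapsto -X$ and $X_0$ is centrosymmetric. Star-shapedness amounts to keeping $r(\theta,t)$ strictly positive, which will be obtained from the $C^0$ estimates below.

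The first nontrivial step is to obtain uniform two-sided bounds on $r$ and eventually on $\kappa$. An upper bound $r_{\max}(t)\le C$ is relatively cheap: $L(t)$ is monotonically decreasing, and by centrosymmetry the diameter of the curve is exactly $2r_{\max}$ (realized by a chord through the origin), so $r_{\max}$ is controlled by $L(0)$. A lower bound $r_{\min}(t)\ge c>0$, preventing the curve from pinching through the origin, is the most delicate $C^0$ estimate; centrosymmetry enters crucially here by coupling $r$ near $\theta$ to $r$ near $\theta+\pi$, and by making the conservation law $A=\tfrac12\int_0^{2\pi}r^2\,d\theta$ effective on symmetric pairs of antipodal intervals. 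Once $r$ is confined to a compact interval $[c,C]$, standard parabolic bootstrapping on the PDE for $r$ yields uniform higher-order estimates and hence global existence of the flow.

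The heart of the theorem is to show that the evolving curve becomes convex in finite time. I would work with the curvature evolution equation
\[
\frac{\partial\kappa}{\partial t}=\kappa_{ss}+\kappa^{3}-\frac{2\pi}{L}\kappa^{2},
\]
together with the dissipation identity
\[
\frac{dL}{dt}=-\int\Bigl(\kappa-\frac{2\pi}{L}\Bigr)^{2}ds\le 0,
\]
which forces $\kappa$ toward the constant $2\pi/L$ in a time-integrated sense. The key additional input from centrosymmetry is that the zero set of $\kappa(\cdot,t)$ is itself centrally symmetric, so inflection points come in antipodal pairs and the concave arcs can be tracked and paired up. A comparison-principle argument on each such arc, combined with the $L^{2}$-dissipation above and the uniform $r$-bounds, should drive $\min_\theta\kappa(\theta,t)$ above zero after a finite waiting time $t_{0}$. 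This convexification step is the main obstacle: without centrosymmetry Gage's counterexample shows that it fails in general, so the argument must genuinely use the symmetric pairing of inflection points to rule out persistent concavities.

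Finally, at time $t_{0}$ the curve is smooth, embedded, strictly convex, and has the same area $A$ as $X_{0}$ together with length $L(t_{0})\le L(0)$. Gage's theorem \cite{Gage-1986} for convex initial data then applies on $[t_{0},\infty)$ and yields smooth convergence of $X(\cdot,t)$ to a round circle as $t\to\infty$, completing the proof.
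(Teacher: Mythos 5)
Your outline matches the paper's general architecture (polar reduction, symmetry preservation, $C^0$ bounds, convexify, then quote Gage's convex result), but the two steps that carry the actual difficulty are only asserted, and the mechanisms you hint at would not close them. First, the lower bound $r_{\min}(t)\ge c>0$: invoking the conserved area $A=\tfrac12\int_0^{2\pi}r^2\,d\theta$ on antipodal pairs cannot prevent pinching toward the origin, since a thin centrosymmetric star-shaped region keeps its area while $r_{\min}$ becomes arbitrarily small; and the tempting argument ``if $r(\theta_*,t)=0$ then by symmetry $r(\theta_*+\pi,t)=0$, contradicting embeddedness'' is not available, because Gage's area-preserving flow is not known to preserve embeddedness (this is exactly what goes wrong in Mayer's example). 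The paper's key idea, which is absent from your proposal, is a comparison with the curve shortening flow: restart the CSF from the curve $X(\cdot,t_*-\varepsilon_0)$, note that the GAPF radial speed exceeds the CSF radial speed by the positive term $2\pi g/(rL)$, so the maximum principle gives $r\ge\rho$; then one proves (Lemmas \ref{lem:3.1.201909}--\ref{lem:3.2.201909}, using Grayson's theorem and Huisken's distance comparison/embeddedness for CSF, where centrosymmetry forces antipodal touching of $O$) that $\rho$ stays bounded below by a positive constant up to just past $t_*$. This, plus a Tso-type estimate on $\kappa/(p-h)$ to bound the curvature (Lemma \ref{lem:3.7.201909}), is what yields global existence; your ``standard bootstrapping from $c\le r\le C$'' also silently skips the gradient bound (Lemma \ref{lem:2.4.201909}) needed for uniform parabolicity.

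Second, the convexification step is misdiagnosed. You propose pairing antipodal inflection points and running a comparison principle on concave arcs, and you claim symmetry is what makes convexification possible; but Gage's/Mayer's example exhibits finite-time blow-up, not a globally existing flow that fails to convexify. In the paper, centrosymmetry is used only to get global existence; once the flow exists on $[0,+\infty)$, convexity in finite time holds for any smooth embedded initial curve (Theorem \ref{thm:4.1.201909}) via a Grayson-style integral argument: $\int_0^L(\kappa-\tfrac{2\pi}{L})^2ds\to0$ (Lemma \ref{lem:4.2.201909}), then $\int_0^L\kappa_s^2\,ds\to0$ (Lemma \ref{lem:4.3.201909}), and Sobolev embedding gives $\kappa\to 2\pi/L>0$ uniformly. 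Your arc-pairing scheme is not carried out, and as stated it is not a proof; without supplying either the CSF comparison for the lower bound or the $L^2$ dissipation argument for convexity, the proposal has genuine gaps at both of the theorem's critical points.
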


A flow %(\ref{eq:1.1.201909})
is called global if the evolving curve is smooth for all $t\in [0, +\infty)$.
For an initial star-shaped curve in the plane, it is still unknown whether Gage's area-preserving flow
exists globally or not (see Lemma \ref{lem:2.5.201909} and Remark \ref{rem:3.4.201909}).
If $X_0$ is only star-shaped, %without other conditions,
it seems that the flow (\ref{eq:1.1.201909}) may not preserve the star-shapedness of the evolving
curve during the evolution process. It
causes essential difficulties to understand the asymptotic behavior of $X(\cdot, t)$.
The extra symmetric property of $X_0$ is inspired by Prof. Michael Gage \cite{PC-2018}
(also see the early work \cite{Gage-1993, Gage-Li-1994} by Gage and Li).

The proof of Theorem \ref{thm:1.1.201909} is divided into two parts. In %the proof of
the first part, i.e. the global existence of the flow,
the star-shapedness of the evolving curve plays an essential role,
see Lemma \ref{lem:3.7.201909} and Corollary \ref{cor:3.8.201909}. To show
that $X(\cdot, t)$ is star-shaped, $X_0$ needs to be of centrosymmetry. % of $X_0$ is needed. Suppose
If $X_0$ is centrosymmetric with respect to the point $O$, the
key idea in this part is to show that the evolving curve never touches the point $O$
via a comparison to the evolution behavior of the famous CSF,
see Lemmas \ref{lem:3.1.201909}-\ref{lem:3.6.201909} for the detaails. In %the proof of
the second part, i.e. the convergence of the evolving curve,
some ideas in Grayson's papers \cite{Grayson-1987, Grayson-1989} are adopted, see
Lemmas \ref{lem:4.2.201909}-\ref{lem:4.3.201909}.

Once $X(\cdot, t)$ is proved to be star-shaped, the polar angle $\theta$ can be used
as a parameter of the evolving curve. In order to make $\theta$ independent of time, one can add a tangent
component to the origin flow to get a new one:
\begin{equation}\label{eq:1.2.201909}
\left\{\begin{array}{l}
\frac{\partial X}{\partial t}(\varphi, t) = \alpha T + \left(\kappa - \frac{2\pi}{L}\right)N
\ \ \ \text{in} \ \ S^1\times (0, \omega),\\
X(\varphi, 0)= X_0(\varphi) \ \  \ \ \ \  \text{on} \ \ S^1.
\end{array} \right.
\end{equation}
The tangent component $\alpha(\varphi, t)T(\varphi, t)$
(not influence the shape of $X$, see Proposition 1.1 on page 6 of \cite{Chou-Zhu}) will be determined
in the next section.

GAPF \cite{Gage-1986} has also been considered by Wang, Wo and Yang \cite{Wang-Wo-Yang-2018} if the initial curve
is closed and locally convex. They have studied some asymptotic behaviours of the evolving curve,
including the convergence for global flows and some blow-up properties.
For higher dimensional cases, %the relative work of higher dimensions,
one can refer to Huisken's volume-preserving flow of convex hypersurfaces \cite{Huisken-1987}
and its generalization by Kim and Kwon \cite{Kim-Kwon-2018} to the case of star-shaped hypersurfaces with
a so called $\rho$-reflection property.

This paper is organized as follows. In Section 2, some basic properties of the flow (\ref{eq:1.1.201909})
are obtained, including the short time existence and a property of $X(\cdot, t)$ which implies its star-shapeness.
In Section 3, it is  proved that the flow (\ref{eq:1.1.201909}) exists on the time interval $[0, +\infty)$.
And in the final section, the proof of Theorem \ref{thm:1.1.201909} is completed. % in Section 4.

\section{Preparation}
\setcounter{equation}{0}

Given a curve $X$ in the plane, its ``support function" is defined by\footnote{$p$ is usually
called the support function of $X$ in convex geometry, see, for example, \cite{Gru-2007, Hs-1981, Sch-2014}.}
$$ p=-\langle X, N\rangle.$$
If the curve is expressed as  $X(s)=(x(s), y(s))$, $s\in[0,L]$, where $L$ is the length of $X$, then its
unit tangent and normal vector fields can be written as
$$T(s)=(\dot{x}(s), \dot{y}(s)), \ \ \ \ \ \ \  N(s)=(-\dot{y}(s), \dot{x}(s)), $$
where ``$\cdot$" stands for derivative with respect to the arc length parameter $s$. Since
$$p(s)=x(s)\dot{y}(s)-\dot{x}(s)y(s)=\det (X(s), T(s)),$$
$X$ is star-shaped with respect to the origin $O$
if and only if $p(s)>0$ for all $s$.

Using the polar coordinate system $(r, \theta)$ for the plane, a smooth and closed curve can be expressed as
$$X(s)=r(\theta)P(\theta),$$
where $\theta=\theta(s)$ and $P(\theta)=(\cos\theta, \sin\theta)$. Let $Q(\theta)=(-\sin\theta, \cos\theta)$,
then the unit tangential vector of $X$ can be given by $$T = \frac{dr}{ds}P +
r\frac{d\theta}{ds} Q,$$
and furthermore
\begin{equation}\label{eq:2.1.201909}
\det \langle X(s), T(s)\rangle=r^2(s) \dot{\theta}(s).%\frac{\partial \theta}{\partial s}.
\end{equation}
If $X$ is closed and star-shaped then one can choose the origin $O$ of our frame so that $r(s)>0$
and $\det \langle X, T\rangle > 0$. The equation (\ref{eq:2.1.201909}) implies that one can use the polar
angle $\theta$ to parameterize a star-shaped plane curve.

Now let us deal with the flow (\ref{eq:1.1.201909}) with a star-shaped initial value $X_0$.
We shall first derive some evolution equations and determine the tangential component $\alpha T$
to make the polar angle $\theta$ independent
of time $t$. Then (\ref{eq:1.1.201909}) can be reduced to a Cauchy problem of a single equation
for the radial function $r=r(\theta, t)$. After that, some basic properties of the flow
(\ref{eq:1.1.201909}) will be explored in this section.

Let $g:=\sqrt{\langle \frac{\partial X}{\partial \varphi}, \frac{\partial X}{\partial \varphi}\rangle}$
be the metric of the evolving curve. Set
$\beta=\kappa-\frac{2\pi}{L}$. Under the flow (\ref{eq:1.2.201909}), $g$ evolves according to
\begin{eqnarray*}
\frac{\partial g}{\partial t}
= \frac{1}{g}\left\langle\frac{\partial}{\partial t}\frac{\partial X}{\partial \varphi}, \frac{\partial X}{\partial \varphi}\right\rangle
=g\left\langle\frac{\partial}{\partial s}(\alpha T+\beta N), T\right\rangle
=\left(\frac{\partial\alpha}{\partial s}-\beta\kappa\right)g.
\end{eqnarray*}
The interchange of the operators $\partial/\partial s$ and $\partial/\partial t$ is given by
\begin{eqnarray*}
\frac{\partial}{\partial t}\frac{\partial}{\partial s}=\frac{\partial}{\partial t}\left(\frac{1}{g}\frac{\partial}{\partial \varphi}\right)
=\frac{\partial}{\partial s}\frac{\partial}{\partial t}-\left(\frac{\partial\alpha}{\partial s}-\beta\kappa\right)\frac{\partial}{\partial s}.
\end{eqnarray*}
$T$ and $N$ evolve according to
\begin{eqnarray*}
&&\frac{\partial T}{\partial t}=\frac{\partial }{\partial t}\frac{\partial X}{\partial s}
=\frac{\partial }{\partial s}\frac{\partial X}{\partial t}-\left(\frac{\partial\alpha}{\partial s}-\beta\kappa\right)T
=\left(\alpha\kappa+\frac{\partial\beta}{\partial s}\right)N,
\\
&&\frac{\partial N}{\partial t}=\left\langle\frac{\partial N}{\partial t}, T\right\rangle T
+\left\langle\frac{\partial N}{\partial t}, N\right\rangle N
=-\left(\alpha\kappa+\frac{\partial\beta}{\partial s}\right)T.
\end{eqnarray*}
If there is a family of star-shaped curves evolving under the flow (\ref{eq:1.2.201909}) then we can express the evolving curve as
\begin{eqnarray}\label{eq:2.2.201909}
X(\theta, t)=r(\theta, t)P(\theta).
\end{eqnarray}
Noticing that $\frac{\partial X}{\partial \theta}=\frac{\partial r}{\partial \theta}P+rQ$, we obtain
\begin{eqnarray}\label{eq:2.3.201909}
g=\left\|\frac{\partial X}{\partial \theta}\right\|=\left(r^2+\left(\frac{\partial r}{\partial \theta}\right)^2\right)^{1/2}, ~~
T=\frac{\partial r}{\partial s}P+\frac{r}{g}Q, ~~ N=-\frac{r}{g}P+\frac{\partial r}{\partial s}Q.
\end{eqnarray}
Differentiating the right hand side of (\ref{eq:2.2.201909}) and using (\ref{eq:1.2.201909}) and (\ref{eq:2.3.201909}), one gets
\begin{eqnarray*}
\frac{\partial r}{\partial t}P+r\frac{\partial \theta}{\partial t}Q=\alpha T+\beta N
=\left(\alpha\frac{\partial r}{\partial s}-\frac{r\beta}{g}\right)P
+\left(\frac{\alpha r}{g}+\beta\frac{\partial r}{\partial s}\right) Q.
\end{eqnarray*}
Comparing the coefficients of both sides can yield the following evolution equations:
\begin{eqnarray}\label{eq:2.4.201909}
\frac{\partial r}{\partial t}=\alpha\frac{\partial r}{\partial s}-\frac{r\beta}{g}, \ \ \
\frac{\partial \theta}{\partial t}=\frac{\alpha}{g}+\frac{\beta}{r}\frac{\partial r}{\partial s}.
\end{eqnarray}
From now on, we choose $$\alpha=-\frac{\beta}{r}\frac{\partial r}{\partial s}g=-\frac{\beta}{r}\frac{\partial r}{\partial \theta}$$
so that $\frac{\partial \theta}{\partial t}\equiv 0$, the polar angle $\theta$ is independent of the time $t$.
%\begin{eqnarray*} \frac{\partial \theta}{\partial t}\equiv 0. \end{eqnarray*}
Since the curvature of the evolving curve is
$$\kappa=\frac{1}{g^3}\left(-r\frac{\partial^2 r}{\partial \theta^2}+2\left(\frac{\partial r}{\partial \theta}\right)^2+r^2\right),$$
one can immediately obtain the evolution equation of $r$,
\begin{eqnarray}\label{eq:2.5.201909}
\frac{\partial r}{\partial t}=\frac{1}{g^2} \frac{\partial^2 r}{\partial \theta^2}-\frac{2}{rg^2}\left(\frac{\partial r}{\partial \theta}\right)^2
-\frac{r}{g^2}+\frac{2\pi g}{rL}.
\end{eqnarray}

Now, %On the other hand,
if $r=r(\theta, t)>0$ is defined on $[0, 2\pi]\times [0, \omega)$ and satisfies
the equation (\ref{eq:2.5.201909}), then a family of curves
$\{X=rP|t\in [0, \omega)\}$ satisfies the flow (\ref{eq:1.2.201909}). So we can reduce the flow (\ref{eq:1.2.201909}) to
the equation (\ref{eq:2.5.201909}) with initial value $r_0(\theta)>0$:
%Since the equation (\ref{eq:2.5.201909}) is strictly
%parabolic, it has a unique smooth solution on a short time interval.
\begin{lemma}\label{lem:2.1.201909}
Suppose $X_0$ is star-shaped. The flow (\ref{eq:1.2.201909}) is equivalent to the
quasi-linear parabolic equation (\ref{eq:2.5.201909})
with a positive initial value $r_0$ in some interval $[0, \omega)$.
\end{lemma}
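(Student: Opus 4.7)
The plan is to establish the equivalence in both directions and then supply short-time existence for the scalar PDE; the bulk of the forward computation has already been carried out above the statement, so what remains is careful bookkeeping.

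In the forward direction, suppose $X(\varphi,t)$ solves (\ref{eq:1.2.201909}) with $X_0$ smooth, embedded and star-shaped about the origin. Because $p_0(s) = \det(X_0, T_0) > 0$ on $S^1$ and $X$ is smooth in $t$, continuity gives $p(s,t) > 0$ on some strip $S^1 \times [0, \omega)$, which by (\ref{eq:2.1.201909}) means $r(\theta, t) > 0$ and the curve admits a polar parametrization $X = rP$. The derivation leading to (\ref{eq:2.4.201909}), together with the specific choice $\alpha = -(\beta/r)(\partial r/\partial \theta)$, makes $\partial \theta/\partial t \equiv 0$, so $\theta$ is an admissible time-independent parameter. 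Substituting the curvature formula $\kappa = g^{-3}(-r\,r_{\theta\theta} + 2 r_\theta^2 + r^2)$ into $\partial_t r = \alpha\,\partial_s r - r\beta/g$ and simplifying yields exactly (\ref{eq:2.5.201909}).

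For the converse, let $r(\theta,t) > 0$ solve (\ref{eq:2.5.201909}) with $r(\cdot, 0) = r_0$ on $[0,2\pi] \times [0,\omega)$ and set $X(\theta, t) = r(\theta, t) P(\theta)$. Using (\ref{eq:2.3.201909}) to rewrite $P$ and $Q$ in terms of $T$ and $N$, and inverting the coefficient-matching that produced (\ref{eq:2.4.201909}), one checks by direct computation that
\[
\frac{\partial X}{\partial t} = \frac{\partial r}{\partial t} P = \alpha T + \beta N
\]
with $\beta = \kappa - 2\pi/L$ and $\alpha = -(\beta/r)\,\partial r/\partial \theta$. Thus $X$ satisfies the flow (\ref{eq:1.2.201909}), giving the reverse implication.

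Third, I would record the local existence. The PDE (\ref{eq:2.5.201909}) has the form $r_t = g^{-2} r_{\theta\theta} + F(r, r_\theta) + 2\pi g/(rL)$, which is uniformly parabolic wherever $r > 0$, since then $g^{-2} = (r^2 + r_\theta^2)^{-1}$ is bounded below; the nonlocal term $2\pi g/(rL)$ with $L(t) = \int_0^{2\pi} g\,d\theta$ is a smooth functional of $(r, r_\theta)$ and does not affect parabolicity. Because $r_0$ is smooth, $2\pi$-periodic and has a strictly positive lower bound on the compact circle, standard quasilinear parabolic theory on $S^1$ (for instance Ladyzhenskaya–Solonnikov–Ural'ceva) furnishes a unique smooth solution on some interval $[0,\omega)$; continuity in $t$ preserves $r > 0$ after shrinking $\omega$ if necessary. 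The main (and only) subtlety is verifying that the curvature substitution reproduces the coefficient $1/g^2$ of $r_{\theta\theta}$ exactly and that the nonlocal term leaves the problem in the scope of the standard local theorem; both are routine, so the equivalence follows.
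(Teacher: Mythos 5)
Your proposal is correct and takes essentially the same route as the paper: Lemma \ref{lem:2.1.201909} is just a summary of the polar-coordinate derivation preceding it, and your two directions (the choice $\alpha=-\frac{\beta}{r}\frac{\partial r}{\partial\theta}$ forcing $\frac{\partial\theta}{\partial t}\equiv 0$ plus substitution of the curvature formula, and the reverse coefficient matching showing $X=rP$ satisfies the flow) reproduce exactly that computation, with star-shapedness persisting by continuity on a possibly smaller interval, as the statement allows. The short-time existence paragraph you append is not part of this lemma --- in the paper it is Lemma \ref{lem:2.2.201909}, obtained from uniform parabolicity near $r_0$ via the Frechet derivative of the operator $F$ and the implicit function theorem in Banach spaces rather than by citing Ladyzhenskaya--Solonnikov--Ural'ceva --- but including it does no harm.
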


The length of the curve can be calculated according to %It follows from the definition of the length
$$L(t)=\int_0^{2\pi}g(\theta, t) d\theta=\int_0^{2\pi}\sqrt{r^2+\left(\frac{\partial r}{\partial \theta}\right)^2} d\theta.$$
Let us define an operator $F$ from the space
$C^{2, \alpha}([0, 2\pi]\times [0, \omega))$ to $C^{\beta}([0, 2\pi]\times [0, \omega))$, for $0<\beta<\alpha\leq 1$ according to
$$F(r)=\frac{\partial r}{\partial t}-
\frac{1}{g^2} \frac{\partial^2 r}{\partial \theta^2}
+\frac{2}{rg^2}\left(\frac{\partial r}{\partial \theta}\right)^2
+\frac{r}{g^2}-\frac{2\pi g}{rL}.$$
Since the Frechet derivative of $F$ at some point $r_0>0$ is
$$DF(r_0)f=\frac{\partial f}{\partial t}-\frac{1}{r_0^2+(\frac{\partial r_0}{\partial\theta})^2}\frac{\partial^2 f}{\partial\theta^2}
+\text{lower linear terms of}\ f,$$
the equation (\ref{eq:2.5.201909}) is uniformly parabolic near its initial value $r_0$. It follows from the
implicit function theorem of Banach spaces that
the Cauchy problem (\ref{eq:2.5.201909}) has a unique solution in some small time interval (See Section 1.2 in \cite{Chou-Zhu}).
Using Lemma \ref{lem:2.1.201909} can give us the short time existence.
\begin{lemma}\label{lem:2.2.201909}
The flow (\ref{eq:1.1.201909})  has a unique smooth solution in some time interval $[0, \omega)$, where $\omega>0$.
\end{lemma}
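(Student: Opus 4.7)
The plan is to reduce the problem entirely to the scalar Cauchy problem for the radial function $r(\theta,t)$ and to invoke the standard short-time existence theory for quasi-linear parabolic equations, then translate back to the geometric flow via Lemma \ref{lem:2.1.201909}. Since $X_0$ is smooth, embedded and star-shaped, its radial function $r_0(\theta)=\|X_0(\theta)\|$ is smooth and strictly positive on $S^1$, so $r_0$ lies in an open ball (in an appropriate Hölder space) of positive functions on which the operator $F$ defined just before the lemma is well-defined and smooth.

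First I would confirm uniform parabolicity near $r_0$. The Fréchet derivative $DF(r_0)$ computed in the excerpt has principal symbol $-(r_0^2+(\partial_\theta r_0)^2)^{-1}\partial_\theta^2$, which is strictly elliptic because $r_0>0$ and the metric $g_0=\sqrt{r_0^2+(\partial_\theta r_0)^2}$ is bounded away from zero on the compact circle. Next I would apply the implicit function theorem in Banach spaces (as described in Section 1.2 of \cite{Chou-Zhu}), or equivalently the classical quasi-linear parabolic theory, to obtain a unique solution $r\in C^{2,\alpha}([0,2\pi]\times[0,\omega))$ of (\ref{eq:2.5.201909}) with initial datum $r_0$, on some maximal short time interval $[0,\omega)$ with $\omega>0$. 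Standard bootstrapping using Schauder estimates would then upgrade $r$ to be smooth, and by continuity $r(\theta,t)>0$ for $t$ in a possibly smaller interval, so the family $X(\theta,t)=r(\theta,t)P(\theta)$ consists of smooth star-shaped curves.

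By Lemma \ref{lem:2.1.201909} this $r$ yields a smooth solution of the modified flow (\ref{eq:1.2.201909}). To pass from (\ref{eq:1.2.201909}) to (\ref{eq:1.1.201909}) I would invoke Proposition 1.1 on page 6 of \cite{Chou-Zhu}, which says that the tangent component $\alpha T$ amounts only to a time-dependent reparameterization of $S^1$ and therefore does not alter the geometric solution; composing $X$ with the flow of the corresponding vector field on $S^1$ produces a smooth solution of (\ref{eq:1.1.201909}) on the same interval $[0,\omega)$.

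For uniqueness of solutions to (\ref{eq:1.1.201909}), which is the subtle point since different parameterizations describe the same evolving curve, I would argue that any smooth solution $\widetilde{X}$ of (\ref{eq:1.1.201909}) with initial curve $X_0$ remains star-shaped for short time by continuity of $\det(\widetilde{X},\widetilde{T})$, so it can be reparameterized by polar angle and its radial function satisfies (\ref{eq:2.5.201909}); uniqueness at the PDE level then forces $\widetilde{X}$ to coincide, up to reparameterization, with the solution constructed above. The main obstacle I expect is precisely this bookkeeping between geometric and parametric uniqueness, together with verifying that the positivity $r>0$ (equivalently, star-shapedness) persists on a definite time interval so that the reduction of Lemma \ref{lem:2.1.201909} remains valid; but both follow from continuity and the strict positivity of $r_0$ on the compact domain $S^1$.
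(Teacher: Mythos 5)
Your proposal follows essentially the same route as the paper: reduce to the radial equation (\ref{eq:2.5.201909}) using the star-shapedness of $X_0$, verify uniform parabolicity of $F$ near $r_0>0$, invoke the implicit function theorem in Banach spaces (Section 1.2 of \cite{Chou-Zhu}) for unique short-time solvability, and return to the flow (\ref{eq:1.1.201909}) via Lemma \ref{lem:2.1.201909} and the tangential-reparameterization fact from \cite{Chou-Zhu}. Your extra remarks on geometric versus parametric uniqueness and on the persistence of $r>0$ merely spell out details the paper leaves implicit, so the argument is correct and matches the paper's proof.
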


Next, we shall derive some basic properties of the flow (\ref{eq:1.1.201909}).

\begin{lemma}\label{lem:2.3}
Under the flow (\ref{eq:1.1.201909}), the area $A$ of the evolving curve is constant, that is, $A(t)\equiv A_0$;
and the length $L$ satisfies $\sqrt{4\pi A_0}\leq L(t)\leq L_0$.
\end{lemma}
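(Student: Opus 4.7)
The plan is to prove the three assertions in turn via standard first variation calculations for curve flows, drawing on the evolution of the metric $g$ already computed in the paper (with $\alpha=0$, namely $\partial_t g=-\beta\kappa\, g$ where $\beta=\kappa-2\pi/L$).

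First I would handle the area. Writing $A(t)=\frac{1}{2}\int_0^L\langle X,N\rangle\, ds$, or more directly using the standard first variation formula for the enclosed area under a normal flow with inward normal, one has
\[
\frac{dA}{dt}=-\int_0^L \beta\, ds=-\int_0^L \kappa\, ds+\frac{2\pi}{L}\cdot L.
\]
The evolving curve is smooth and embedded on $[0,\omega)$ by Lemma \ref{lem:2.2.201909}, so Hopf's Umlaufsatz gives $\int_0^L\kappa\, ds=2\pi$. Thus $dA/dt\equiv 0$ and $A(t)\equiv A_0$.

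Next I would bound the length from above. Using $L(t)=\int_0^{2\pi} g\, d\varphi$ and $\partial_t g=-\beta\kappa\, g$, I get
\[
\frac{dL}{dt}=-\int_0^L \beta\kappa\, ds=-\int_0^L\kappa^2\, ds+\frac{2\pi}{L}\int_0^L\kappa\, ds=-\int_0^L\kappa^2\, ds+\frac{(2\pi)^2}{L}.
\]
By Cauchy--Schwarz, $(2\pi)^2=\bigl(\int_0^L\kappa\, ds\bigr)^2\le L\int_0^L\kappa^2\, ds$, so $dL/dt\le 0$ and $L(t)\le L_0$. Finally, the lower bound follows at once from the classical isoperimetric inequality applied to the (simple) evolving curve: $L(t)^2\ge 4\pi A(t)=4\pi A_0$.

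There is no real obstacle here; the only point that deserves a brief mention is that the two identities $\int\kappa\, ds=2\pi$ and $L^2\ge 4\pi A$ both rely on the evolving curve staying simple and smooth, which is exactly what Lemma \ref{lem:2.2.201909} guarantees on the interval of existence. Everything else is a direct substitution of $\beta=\kappa-2\pi/L$ into the first variation formulas for area and length combined with a one-line Cauchy--Schwarz.
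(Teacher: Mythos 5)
Your proposal is correct and follows essentially the same route as the paper: the paper simply cites the standard first-variation formulas for $A$ and $L$ (equations (1.18)--(1.19) in \cite{Chou-Zhu}), which your computation of $dA/dt=-\int\beta\,ds=0$ and $dL/dt=-\int\beta\kappa\,ds\le 0$ (via Cauchy--Schwarz) reproduces explicitly, and then invokes the classical isoperimetric inequality for the lower bound exactly as you do.
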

\begin{proof}
This result is a direct consequence of %Using
the equations (1.18)-(1.19) in \cite{Chou-Zhu} and %, one obtains %that
%\begin{eqnarray*} \frac{d A}{d t}=-\int_0^L (\kappa-\frac{2\pi}{L}) ds=0, \\
%\frac{d L}{d t}=-\int_0^L \kappa^2 ds+\frac{4\pi^2}{L}\leq 0. \end{eqnarray*}
%The last inequality follows from the Cauchy-Schwarz inequality. So the area enclosed by $X$ is invariant and the length $L$ is decreasing,
%which together with
the classical isoperimetric inequality.
\end{proof}

Under the flow (\ref{eq:1.2.201909}), the ``support function" is
$$p=-\langle X, N\rangle=-\left\langle rP, -\frac{r}{g}P+\frac{1}{g}\frac{\partial r}{\partial \theta} Q\right\rangle =\frac{r^2}{g},$$
and thus a closed curve is star-shaped if and only if $r>0$ and $\left|\frac{\partial r}{\partial \theta}\right|$ is bounded everywhere.
Since $r$ and $\left|\frac{\partial r}{\partial \theta}\right|$ satisfy parabolic equations, one can apply the comparison principle
to bound these two functions. For a continuous function $f=f(\theta, t)$, set
$$f_{\min}(t)=\min\{f(\theta, t)|\theta\in [0, 2\pi]\}, ~~f_{\max}(t)=\max\{f(\theta, t)|\theta\in [0, 2\pi]\}.$$

\begin{lemma}\label{lem:2.4.201909}
Given a star-shaped curve $X_0$, we choose a point $O$ in the plane such that $r_0(\theta)>0$. If $r(\theta, t)\geq c>0$ for
$(\theta, t)\in [0, 2\pi]\times [0, \omega)$ under the flow (\ref{eq:1.2.201909}) then
\begin{eqnarray}
r(\theta, t) \leq \frac{L_0}{2}, \label{eq:2.6.201909}\\
\left|\frac{\partial r}{\partial \theta}(\theta, t) \right|\leq C_1, \label{eq:2.7.201909}
\end{eqnarray}
where $C_1=\max\left\{\max\limits_{\theta}\left|\frac{\partial r}{\partial \theta}(\theta, 0)\right|, \frac{3L_0}{\pi}\right\}$
is a constant depending on the initial curve $X_0$.
\end{lemma}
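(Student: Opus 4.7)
The plan is to establish the two estimates by independent methods.

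For \eqref{eq:2.6.201909}, I use a purely geometric argument combined with the length-monotonicity statement of Lemma \ref{lem:2.3}. Since $X(\cdot,t)$ is star-shaped about $O$ with radial function $r>0$, the line through $O$ in direction $\theta$ meets the curve in exactly the two antipodal points $X(\theta,t)$ and $X(\theta+\pi,t)$, and the chord joining them passes through $O$ with length $r(\theta,t)+r(\theta+\pi,t)$. This chord splits the closed curve into two arcs, the shorter of which has length at most $L(t)/2$; since any chord length is bounded by any arc connecting its endpoints,
\[
r(\theta,t)+r(\theta+\pi,t)\;\leq\;\frac{L(t)}{2}\;\leq\;\frac{L_0}{2}.
\]
As $r(\theta+\pi,t)>0$ by hypothesis, this yields $r(\theta,t)\leq L_0/2$.

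For \eqref{eq:2.7.201909}, I apply the parabolic maximum principle to $v:=\partial r/\partial\theta$. Differentiating \eqref{eq:2.5.201909} in $\theta$ (which commutes with $\partial_t$ since the parameter $\theta$ was chosen time-independent) produces a quasi-linear equation of the form $\partial_t v = g^{-2}\,v_{\theta\theta}+(\text{nonlinear terms in }r,v,v_\theta)$. At a spatial maximum $\theta_0$ where $v>0$, the relations $v_\theta=r_{\theta\theta}(\theta_0,t)=0$ and $v_{\theta\theta}(\theta_0,t)\leq 0$ reduce $g^2$ to $r^2+v^2$ and allow one to compute
\[
\partial_t v\,(\theta_0,t)\;\leq\;\frac{2v^{3}(3r^{2}+v^{2})}{r^{2}g^{4}}+\frac{v(r^{2}-v^{2})}{g^{4}}-\frac{2\pi\,v^{3}}{L\,r^{2}g}.
\]
Using \eqref{eq:2.6.201909} to bound $r\leq L_0/2$, Lemma \ref{lem:2.3} to bound $L\leq L_0$, and $g\leq\sqrt{2}\,v$ (valid once $v\geq r$, which the assumption $v>3L_0/\pi$ already forces via $r\leq L_0/2$), a direct numerical estimate shows the last term dominates the other two as soon as $v>3L_0/\pi$, so $\partial_t v(\theta_0,t)\leq 0$ there. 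A standard ODE comparison on $v_{\max}(t)$ then propagates the bound $v(\theta,t)\leq \max\{v_{\max}(0),3L_0/\pi\}$ for all $t\in[0,\omega)$. Running the same argument at a spatial minimum of $v$ (where $v<0$) yields the symmetric lower bound, and together they imply \eqref{eq:2.7.201909}.

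The main obstacle is the sign bookkeeping inside the maximum-principle step. The nonlinear term $-2(r_\theta)^2/(rg^2)$ in \eqref{eq:2.5.201909} contributes a \emph{positive} quantity upon differentiation at a positive spatial maximum of $v$, so the entire estimate rests on the fact that the global-length correction $2\pi g/(rL)$ --- the defining feature of Gage's area-preserving flow --- supplies a cubic-in-$v$ negative term that outweighs the lower-order positive contributions precisely once $v$ crosses a universal multiple of $L_0$. The explicit constant $3L_0/\pi$ is not sharp; it is chosen with enough slack that after inserting the extremal values $r=L_0/2$ and $L=L_0$, the resulting numerical inequality holds comfortably.
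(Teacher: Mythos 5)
Your proposal is correct, and it reorganizes the paper's argument in a way worth noting. The heart of the lemma --- the gradient bound \eqref{eq:2.7.201909} --- is obtained by essentially the same computation as in the paper: your displayed differential inequality at a spatial extremum of $v=\partial r/\partial\theta$ is exactly the paper's inequality for $w=(\partial r/\partial\theta)^2$ (divided by $v$), and your ``direct numerical estimate'' does check out with the paper's grouping, since $\pi g^{3}\ge \pi v\,(r^{2}+v^{2})\ge 3L(r^{2}+v^{2})\ge L(3r^{2}+v^{2})$ once $v\ge 3L_0/\pi\ge 3L(t)/\pi$, while $r^{2}-v^{2}\le 0$ follows from $v\ge 3L_0/\pi> L_0/2\ge r$; so the nonlocal term really does supply the decisive negative contribution at the same threshold $3L_0/\pi$. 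The genuine difference is in how \eqref{eq:2.6.201909} is reached and in the logical order. The paper first proves a \emph{time-dependent} bound $r\le r_{\max}(0)+\sqrt{\pi/A_0}\,t_0$ and a gradient bound involving $r_{\max}(t)$, uses these to conclude positivity of the support function $p=r^{2}/g$ and hence star-shapedness on $[0,t_0]$, only then invokes the geometric fact $r\le L(t)/2$, and finally revises the gradient bound to get $C_1$. You instead prove $r\le L_0/2$ directly via the antipodal chord through $O$ (of length $r(\theta,t)+r(\theta+\pi,t)$, bounded by the shorter arc, hence by $L(t)/2\le L_0$ using Lemma \ref{lem:2.3}), and then run the maximum principle once with that bound already in hand. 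This is legitimate and avoids the paper's bootstrap, because the representation $X(\theta,t)=r(\theta,t)P(\theta)$ with $r>0$ (built into the flow \eqref{eq:1.2.201909} via Lemma \ref{lem:2.1.201909}) already guarantees that both antipodal points lie on the closed evolving curve and that the segment joining them passes through $O$; no lower bound on $p$ is needed for that step. Two cosmetic remarks: working with $v$ rather than $w=v^{2}$ obliges you, as you note, to repeat the argument at negative minima (the paper's choice of $w$ handles both signs at once), and the auxiliary inequality $g\le\sqrt{2}\,v$ is not actually needed for the sign conclusion, though it is harmless.
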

\begin{proof}
Fix a $t_0\in [0, \omega)$. Let $t\in[0, t_0)$, if $r(\cdot, t)$ attains its maximum value
$r_{\max}(t)$ at $(\theta_*, t)$, then
$$\frac{\partial^2 r}{\partial \theta^2} (\theta_*, t)\leq 0, ~~~
\frac{\partial r}{\partial \theta}(\theta_*, t)=0,~~~
g(\theta_*, t)=r(\theta_*, t).$$
By (\ref{eq:2.5.201909}),
\begin{eqnarray*}
\frac{\partial r}{\partial t}(\theta_*, t) \leq \frac{2\pi}{L(t)}\leq \sqrt{\frac{\pi}{A_0}}.
\end{eqnarray*}
So the maximum principle implies that if $t\in [0, t_0)$ then
\begin{eqnarray}\label{eq:2.8.201909}
r(\theta, t) \leq r_{\max}(0)+ \sqrt{\frac{\pi}{A_0}} t_0.
\end{eqnarray}

Differentiating the equation (\ref{eq:2.5.201909}), one gets
\begin{eqnarray*}
\frac{\partial^2 r}{\partial t \partial \theta}&=&\frac{1}{g^2}\frac{\partial^3 r}{\partial \theta^3}
   -\frac{2}{g^4}\frac{\partial r}{\partial \theta} \left(\frac{\partial^2 r}{\partial \theta^2}\right)^2
   -\frac{4}{rg^2}\frac{\partial r}{\partial \theta}\frac{\partial^2 r}{\partial \theta^2}
   +\frac{2}{r^2g^2}\left(\frac{\partial r}{\partial \theta}\right)^3
   \\
&& +\frac{4}{g^4}\left(\frac{\partial r}{\partial \theta}\right)^3
   +\frac{4}{rg^4}\left(\frac{\partial r}{\partial \theta}\right)^3 \frac{\partial^2 r}{\partial \theta^2}
   -\frac{1}{g^2}\frac{\partial r}{\partial \theta}
   +\frac{2r^2}{g^4}\frac{\partial r}{\partial \theta}
\\
&& +\frac{2\pi}{Lg}\frac{\partial r}{\partial \theta}
   +\frac{2\pi}{rLg} \frac{\partial r}{\partial \theta} \frac{\partial^2 r}{\partial \theta^2}
   -\frac{2\pi g}{r^2L} \frac{\partial r}{\partial \theta}.
\end{eqnarray*}
Let $w=(\frac{\partial r}{\partial \theta})^2$, it evolves according to
\begin{eqnarray*}
\frac{1}{2}\frac{\partial w}{\partial t} &=& \frac{1}{2}\frac{1}{g^2}\frac{\partial^2 w}{\partial \theta^2}
   -\frac{1}{g^2}\left(\frac{\partial^2 r}{\partial \theta^2}\right)^2
   -\frac{2w}{g^4}\left(\frac{\partial^2 r}{\partial \theta^2}\right)^2
   -\frac{2}{rg^2}\frac{\partial r}{\partial \theta}\frac{\partial w}{\partial \theta}
\\
&& +\frac{2w^2}{r^2g^2}
   +\frac{4w^2}{g^4} +\frac{2w}{rg^4}\frac{\partial r}{\partial \theta}\frac{\partial w}{\partial \theta}
   -\frac{w}{g^2}+\frac{2r^2w}{g^4}
 \\
&& +\frac{2\pi w}{Lg}+\frac{\pi}{rLg}\frac{\partial r}{\partial \theta}\frac{\partial w}{\partial \theta}
   -\frac{2\pi gw}{r^2L}.
\end{eqnarray*}
If $w$ attains its maximum value $w_{\max}(t_0)$ at $(\theta_*, t_0)$, then
$\frac{\partial w}{\partial \theta}(\theta_*, t_0)=0, \frac{\partial^2 w}{\partial \theta^2}(\theta_*, t_0)
\leq 0,$ and thus at the point $(\theta_*, t_0)$, one obtains
\begin{eqnarray*}
\frac{1}{2}\frac{\partial w}{\partial t} &\leq& \left(\frac{2w^2}{r^2g^2}+\frac{4w^2}{g^4}\right)+
\left(-\frac{w}{g^2} +\frac{2r^2w}{g^4}\right)
+\left(\frac{2\pi w}{Lg}-\frac{2\pi gw}{r^2L}\right)
\\
&=&\frac{2w^2(3r^2+w)}{r^2g^4}+\frac{w}{g^4}(r^2-w)-\frac{2\pi w^2}{r^2Lg}
\\
&=& \frac{2w^2(3Lr^2+wL)-2\pi w^2(r^2+w)g}{r^2Lg^4}+\frac{w}{g^4}(r^2-w)
\\
&=& \frac{2 w^2r^2(3L-\pi g)+2w^3(L-\pi g)}{r^2Lg^4}+\frac{w}{g^4}(r^2-w).
\end{eqnarray*}
If $w\geq \max\{\left(r_{\max}(t)\right)^2, \left(\frac{3L_0}{\pi}\right)^2\}$
then $3L-\pi g \leq 0$ and $r^2-w\leq 0$. One gets
$\frac{\partial w}{\partial t}(\theta_*, t_0)\leq 0$.
The maximum principle tells us
\begin{eqnarray}\label{eq:2.9.201909}
\left|\frac{\partial r}{\partial \theta}\right|
\leq \max\left\{\max_{\theta}\left|\frac{\partial r}{\partial \theta}(\theta, 0)\right|,
~r_{\max}(t),
~\frac{3L_0}{\pi}\right\}.
\end{eqnarray}

Combining (\ref{eq:2.8.201909}) and (\ref{eq:2.9.201909}), the support function $p=\frac{r^2}{g}$ is positive
on the time interval $[0, t_0]$. The evolving curve $X(\cdot, t)$ is star-shaped with respect to $O$ for
$t\in [0, t_0]$. Since $r$ is the distance from $O$ to $X(\theta, t)$, one obtains
\begin{eqnarray*}
r(\theta, t) \leq \frac{L(t)}{2} \leq \frac{L_0}{2}
\end{eqnarray*}
which gives us (\ref{eq:2.6.201909}) and enables us to %. Using (\ref{eq:2.6.201909}) one can
revise the estimate (\ref{eq:2.9.201909}) as
\begin{eqnarray}\label{eq:2.10.201909}
\left|\frac{\partial r}{\partial \theta}\right|
\leq \max\left\{\max_{\theta}\left|\frac{\partial r}{\partial \theta}(\theta, 0)\right|,
~\frac{L_0}{2},
~\frac{3L_0}{\pi}\right\}
=\max\left\{\max_{\theta}\left|\frac{\partial r}{\partial \theta}(\theta, 0)\right|,
~\frac{3L_0}{\pi}\right\}.
\end{eqnarray}
So we have done.
%Because the flow (\ref{eq:1.1.201909}) preserves the area of the evolving curve, the $L^2$-norm of $r$ is a constant
%$$\int_0^{2\pi}r^2 d\theta\equiv A_0.$$
%By the well known Sobolev Inequality, one gets that
%\begin{eqnarray*}
%\max_{\theta\in [0, 2\pi]}r(\theta, t)\leq \frac{1}{\sqrt{2\pi}}\left(\int_0^{2\pi} r^2 d\theta\right)^{1/2}
%+\sqrt{2\pi}\left(\int_0^{2\pi} (r_1)^2 d\theta\right)^{1/2}\triangleq C.
%\end{eqnarray*}
\end{proof}

It follows from Lemma \ref{lem:2.4.201909} that the support function of the evolving curve is
positive everywhere once a positive lower bound of $r$ is given. So the flow (\ref{eq:1.1.201909}) preserves
star-shapedness of the evolving curve under the condition of Lemma \ref{lem:2.4.201909}. Now one can
conclude that:
\begin{lemma}\label{lem:2.5.201909}
If the flow (\ref{eq:1.1.201909}) does not blow up in the time interval $[0, \omega)$
and $r_{\min}(t)>0$ for $t\in [0, \omega)$ then the
point $O$ is the star center of every evolving curve $X(\cdot, t)$.
\end{lemma}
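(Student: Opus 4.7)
The plan is to reduce everything to Lemma \ref{lem:2.4.201909} by upgrading the pointwise positivity of $r_{\min}(t)$ to a uniform positive lower bound on compact sub-intervals. The statement to establish is that for every fixed $t_0 \in [0, \omega)$, the curve $X(\cdot, t_0)$ is star-shaped with respect to $O$, which, by the formula $p = r^2/g$ derived just before the lemma, is equivalent to showing $r(\theta, t_0) > 0$ and $g(\theta, t_0) < \infty$ uniformly in $\theta$.

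First I would fix an arbitrary $t_0 \in [0, \omega)$. Since the flow does not blow up on $[0, \omega)$, the function $r(\theta, t)$ is smooth on $[0, 2\pi] \times [0, t_0]$, and in particular the function $t \mapsto r_{\min}(t)$ is continuous on the compact interval $[0, t_0]$. The hypothesis $r_{\min}(t) > 0$ for every $t \in [0, \omega)$ then guarantees that
\[
c := \min_{t \in [0, t_0]} r_{\min}(t) > 0,
\]
so $r(\theta, t) \geq c$ holds uniformly on $[0, 2\pi] \times [0, t_0]$.

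Next I would apply Lemma \ref{lem:2.4.201909} on the time interval $[0, t_0]$ (by replacing $\omega$ with any $\omega' \in (t_0, \omega)$): this yields the upper bounds $r(\theta, t) \leq L_0/2$ and $|\partial r/\partial\theta(\theta, t)| \leq C_1$ for all $(\theta, t) \in [0, 2\pi] \times [0, t_0]$. Combining these with the lower bound $r \geq c$, the metric satisfies
\[
g = \sqrt{r^2 + \left(\tfrac{\partial r}{\partial \theta}\right)^2} \leq \sqrt{(L_0/2)^2 + C_1^2},
\]
and therefore the support function
\[
p(\theta, t_0) = \frac{r^2(\theta, t_0)}{g(\theta, t_0)} \geq \frac{c^2}{\sqrt{(L_0/2)^2 + C_1^2}} > 0.
\]
By the characterization from the paragraph preceding Lemma \ref{lem:2.4.201909}, the strict positivity of $p$ is exactly the statement that $X(\cdot, t_0)$ is star-shaped with respect to $O$. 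Since $t_0 \in [0, \omega)$ was arbitrary, the conclusion follows.

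The only subtlety worth mentioning is the interplay between "pointwise in $t$" and "uniform in $t$": the hypothesis only guarantees $r_{\min}(t) > 0$ for each $t$, but Lemma \ref{lem:2.4.201909} requires a uniform positive lower bound. The no-blow-up assumption is precisely what lets us promote the former to the latter on any compact sub-interval $[0, t_0]$ by continuity and compactness. I do not expect any other significant obstacle; the remainder is a direct bookkeeping application of the previously established estimates.
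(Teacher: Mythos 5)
Your proposal is correct and follows essentially the same route as the paper, which states Lemma \ref{lem:2.5.201909} as an immediate consequence of Lemma \ref{lem:2.4.201909} and the characterization of star-shapedness via the positivity of $p=r^2/g$. Your extra step of upgrading the pointwise hypothesis $r_{\min}(t)>0$ to a uniform lower bound on compact subintervals by continuity is exactly the bookkeeping the paper leaves implicit.
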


\section{Global existence}\label{sec:3.201909}

In this section, it is shown that Gage's area-preserving flow (\ref{eq:1.1.201909}) exists in the time interval
$[0, +\infty)$ if the initial smooth curve $X_0$ is centrosymmetric and star-shaped with respect to the
origin $O$.

\subsection{Star-shaped curves under the CSF}\label{subsec:3.1.201909}

The popular curve shortening flow with a smooth, closed and embedded initial curve $X_0$ is defined by
\begin{equation}\label{eq:3.1.201909}
\left\{\begin{array}{l}
\frac{\partial Y}{\partial t}(\varphi, t)=\widetilde{\kappa}(\varphi, t)\widetilde{N}(\varphi, t)
                  \ \ \ \text{in} \ \ S^1\times (0, \omega),\\
Y(\varphi, 0)= X_0(\varphi) \ \  \ \ \ \  \text{on} \ \ S^1,
\end{array} \right.
\end{equation}
where $\widetilde{\kappa}(\varphi, t)$ is the relative curvature with respect to the Frenet frame
$\{\widetilde{T}, \widetilde{N}\}$. Grayson theorem \cite{Grayson-1987} asserts that
the evolving curve $Y(\cdot, t)$ is smooth, preserves its embeddedness and becomes convex on the time interval
$\left[0, \frac{A_0}{2\pi}\right)$, where $A_0$ is the area bounded by $X_0$. Gage-Hamilton theorem
\cite{Gage-1983, Gage-1984, Gage-Hamilton-1986} says that a closed convex curve $X_0$ evolving under
the CSF (\ref{eq:3.1.201909}) becomes asymptotically circular as $t\rightarrow \frac{A_0}{2\pi}$.

If $X_0$ is star-shaped with respect to the origin $O$, it follows from the
continuity of the evolving curve that there exists $t_0>0$ such that $Y(\cdot, t)$ is
star-shaped with respect to $O$ for $t\in [0, t_0)$. So one can add a proper tangent component
to the flow (\ref{eq:3.1.201909})
\begin{equation}\label{eq:3.2.201909}
\left\{\begin{array}{l}
\frac{\partial Y}{\partial t}(\varphi, t)=\widetilde{\alpha} \widetilde{T}+\widetilde{\kappa}\widetilde{N}
                  \ \ \ \text{in} \ \ S^1\times (0, \omega),\\
Y(\varphi, 0)= X_0(\varphi) \ \  \ \ \ \  \text{on} \ \ S^1
\end{array} \right.
\end{equation}
to make the polar angle $\theta$ of $Y(\cdot, t)$ independent of time. Now let us parameterize $Y(\cdot, t)$ by $\theta$
and set $Y(\theta, t)=\rho(\theta, t)P(\theta)$ where $P(\theta)=(\cos\theta, \sin\theta)$.
As is well known that the solution to the flow (\ref{eq:3.1.201909}) differs from that
of the flow (\ref{eq:3.2.201909}) by a reparametrization and a Euclidean translation.
The evolution equation of the radial function $\rho(\theta, t)$ is
\begin{eqnarray}\label{eq:3.3.201909}
\frac{\partial \rho}{\partial t}=\frac{1}{(g_\rho)^2}\frac{\partial^2 \rho}{\partial \theta^2}
-\frac{2}{\rho(g_\rho)^2}\left(\frac{\partial \rho}{\partial \theta}\right)^2
-\frac{\rho}{(g_\rho)^2},
\end{eqnarray}
where $g_\rho = \sqrt{\rho^2 +\left(\frac{\partial \rho}{\partial \theta}\right)^2}$ is the metric
of $Y(\theta, t)$.
\begin{lemma}\label{lem:3.1.201909}
Suppose the initial smooth curve $X_0$ is star-shaped and centrosymmetric with respect to $O$.
Under the CSF (\ref{eq:3.2.201909}), if the evolving curve $Y(\cdot, t)$ is star-shaped with respect to $O$
 then it is centrosymmetric with respect to $O$.
\end{lemma}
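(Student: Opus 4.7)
The plan is to exploit the rotational symmetry of the polar PDE \eqref{eq:3.3.201909} together with uniqueness of solutions, turning the centrosymmetry of the initial data into centrosymmetry of the solution for all later times.

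First, I would translate ``centrosymmetric with respect to $O$'' into a statement about the radial function. Writing $Y(\theta,t)=\rho(\theta,t)P(\theta)$ with $P(\theta)=(\cos\theta,\sin\theta)$ and observing that $-P(\theta)=P(\theta+\pi)$, a star-shaped curve is centrosymmetric with respect to $O$ if and only if $\rho(\theta,t)=\rho(\theta+\pi,t)$ for all $\theta$. In particular, the hypothesis that $X_0$ is centrosymmetric gives $\rho(\theta,0)=\rho(\theta+\pi,0)$.

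Next, I would define $\widetilde{\rho}(\theta,t):=\rho(\theta+\pi,t)$ and check that $\widetilde{\rho}$ solves the same Cauchy problem as $\rho$. The key observation is that equation \eqref{eq:3.3.201909} is autonomous in $\theta$: its coefficients depend on $\rho$ and its $\theta$-derivatives but not on $\theta$ explicitly. Hence the substitution $\theta\mapsto\theta+\pi$ leaves the equation invariant, and $\widetilde{\rho}$ satisfies \eqref{eq:3.3.201909}. Moreover, by centrosymmetry of $X_0$, $\widetilde{\rho}(\theta,0)=\rho(\theta+\pi,0)=\rho(\theta,0)$, so $\widetilde{\rho}$ and $\rho$ share the same initial data.

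Finally, since $Y(\cdot,t)$ is assumed star-shaped with respect to $O$, we have $\rho>0$ along the flow, so \eqref{eq:3.3.201909} is uniformly parabolic on the existence interval (as already recorded in Section 2, where the analogous linearization was computed). Standard uniqueness for such quasilinear parabolic equations on the circle then forces $\widetilde{\rho}\equiv\rho$, i.e.\ $\rho(\theta+\pi,t)=\rho(\theta,t)$ for every $t$ in the existence interval, which is exactly the centrosymmetry of $Y(\cdot,t)$. The only real subtlety is making sure the uniqueness statement applies where we need it, but this is routine since the solution is smooth, $\rho$ is bounded away from zero on compact subintervals, and the problem is posed on the compact manifold $S^1$; no boundary issues arise.
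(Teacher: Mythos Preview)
Your proposal is correct and follows essentially the same route as the paper: define $\widetilde{\rho}(\theta,t)=\rho(\theta+\pi,t)$, observe it satisfies the same Cauchy problem as $\rho$, and invoke uniqueness. The only cosmetic difference is that the paper carries this out by explicitly writing the linear parabolic equation satisfied by the difference $\varphi_1=\widetilde{\rho}-\rho$ and appealing to uniqueness for that linear problem, whereas you appeal directly to the $\theta$-translation invariance of \eqref{eq:3.3.201909} and uniqueness for the quasilinear problem; these are two phrasings of the same argument.
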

\begin{proof}
For $t_*\in \left[0, \frac{A_0}{2\pi}\right)$, the Gage-Hamilton-Grayson theorem tells us that
there exist constants $M_i (t_*)>0$ such that
\begin{eqnarray}\label{eq:3.4.201909}
\left|\frac{\partial^i \rho}{\partial \theta^i}(\theta, t)\right| \leq M_i(t_*), \ \ \ \
(\theta, t)\in [0, 2\pi]\times [0, t_*],\ \  i=1, 2, \cdots.
\end{eqnarray}

Define $\widetilde{\rho}(\theta, t) = \rho (\theta+\pi, t)$ and
$\varphi_1(\theta, t)= \widetilde{\rho}(\theta, t) - \rho (\theta, t)$. Since $X_0$ is centrosymmetric,
$\varphi_1(\theta, 0)\equiv 0$. By the equation (\ref{eq:3.3.201909}),
$\varphi_1(\theta, t)$ evolves according to
\begin{eqnarray}
\frac{\partial \varphi_1}{\partial t} &=& \frac{1}{(g_{\widetilde{\rho}})^2}\frac{\partial^2 \varphi_1}{\partial \theta^2}
+\frac{1}{(g_{\widetilde{\rho}})^2}\frac{\partial^2 \rho}{\partial \theta^2}
-\frac{1}{(g_{\rho})^2}\frac{\partial^2 \rho}{\partial \theta^2}
+\frac{2}{\rho(g_\rho)^2}
\left[\left(\frac{\partial \rho}{\partial \theta}\right)^2 - \left(\frac{\partial \widetilde{\rho}}{\partial \theta}\right)^2\right]
\nonumber\\
&&+\frac{2}{\rho(g_\rho)^2} \left(\frac{\partial \widetilde{\rho}}{\partial \theta}\right)^2
-\frac{2}{\widetilde{\rho}(g_{\widetilde{\rho}})^2} \left(\frac{\partial \widetilde{\rho}}{\partial \theta}\right)^2
+\frac{\rho - \widetilde{\rho}}{(g_\rho)^2} + \frac{\widetilde{\rho}}{(g_\rho)^2} - \frac{\widetilde{\rho}}{(g_{\widetilde{\rho}})^2}
\nonumber\\
&=& \frac{1}{(g_{\widetilde{\rho}})^2}\frac{\partial^2 \varphi_1}{\partial \theta^2}
-\frac{1}{(g_\rho)^2 (g_{\widetilde{\rho}})^2}\frac{\partial^2 \rho}{\partial \theta^2}
           \left(\frac{\partial \rho}{\partial \theta} + \frac{\partial \widetilde{\rho}}{\partial \theta}\right)
           \frac{\partial \varphi_1}{\partial \theta}
-\frac{2}{\rho(g_\rho)^2}\left(\frac{\partial \rho}{\partial \theta}
+\frac{\partial \widetilde{\rho}}{\partial \theta}\right)\frac{\partial \varphi_1}{\partial \theta}
\nonumber\\
&& +\frac{2}{\rho(g_\rho)^2(g_{\widetilde{\rho}})^2}\left(\frac{\partial \widetilde{\rho}}{\partial \theta}\right)^2 \left(\frac{\partial \rho}{\partial \theta}
+\frac{\partial \widetilde{\rho}}{\partial \theta}\right)\frac{\partial \varphi_1}{\partial \theta}
+\frac{\widetilde{\rho}}{(g_\rho)^2 (g_{\widetilde{\rho}})^2}
    \left(\frac{\partial \rho}{\partial \theta} + \frac{\partial \widetilde{\rho}}{\partial \theta}\right)\frac{\partial \varphi_1}{\partial \theta}
\nonumber\\
&& -\frac{\widetilde{\rho} + \rho}{(g_\rho)^2(g_{\widetilde{\rho}})^2} \frac{\partial^2 \rho}{\partial \theta^2} \varphi_1
+\frac{2(\widetilde{\rho} + \rho)}{\rho (g_\rho)^2(g_{\widetilde{\rho}})^2} \left(\frac{\partial \widetilde{\rho}}{\partial \theta}\right)^2 \varphi_1
+\frac{2}{\rho \widetilde{\rho} (g_{\widetilde{\rho}})^2} \left(\frac{\partial \widetilde{\rho}}{\partial \theta}\right)^2 \varphi_1
\nonumber\\
&&
-\frac{\varphi_1}{(g_{\rho})^2} +\frac{\widetilde{\rho}(\widetilde{\rho} + \rho)}{(g_\rho)^2(g_{\widetilde{\rho}})^2}\varphi_1.
\label{eq:3.5.201909}
\end{eqnarray}
The equation (\ref{eq:3.5.201909}) is linear with smooth coefficients (see (\ref{eq:3.4.201909})) and zero initial value.
By the uniqueness of the solution to linear parabolic equations, one has
\begin{eqnarray} \label{eq:3.6.201909}
\varphi_1(\theta, t) \equiv 0.
\end{eqnarray}
So the evolving curve $Y(\cdot, t)$ is centrosymmetric with respect to $O$ for $t\in [0, t_*]$.
The proof is completed by the arbitrary choice of $t_*$.
\end{proof}

\begin{lemma}\label{lem:3.2.201909}
Let the initial smooth curve $X_0$ be star-shaped and centrosymmetric with respect to $O$, then
under the CSF (\ref{eq:3.2.201909}), the evolving curve $Y(\cdot, t)$ is star-shaped.
\end{lemma}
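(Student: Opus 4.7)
I would prove Lemma~\ref{lem:3.2.201909} by combining Lemma~\ref{lem:3.1.201909} with a contradiction-by-maximality argument and Grayson's theorem. Define $T^* := \sup\{t \in [0, A_0/(2\pi)) : Y(\cdot, \tau) \text{ is star-shaped with respect to } O \text{ for all } \tau \in [0, t]\}$. Parabolic short-time existence (analogous to Lemma~\ref{lem:2.2.201909}) gives $T^* > 0$; assume for contradiction that $T^* < A_0/(2\pi)$, the lifetime of the CSF. For $t \in [0, T^*)$, Lemma~\ref{lem:3.1.201909} applies and yields $\rho(\theta+\pi, t) = \rho(\theta, t)$. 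Moreover, since the CSF (\ref{eq:3.1.201909}) commutes with point-reflection through $O$, its uniqueness propagates centrosymmetry up to $t = T^*$: writing $\sigma$ for the fixed-point-free antipodal involution of $S^1$ inherited from $X_0$, one has $Y(\sigma(\varphi), t) = 2O - Y(\varphi, t)$ for every $t \in [0, A_0/(2\pi))$.

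The failure of star-shapedness at $T^*$ amounts to the support function $p = -\langle Y, N\rangle = \rho^2/g_\rho$ touching zero somewhere on $Y(\cdot, T^*)$. \emph{Case~1}: the curve reaches $O$. If $Y(\varphi_0, T^*) = O$, then the centrosymmetry relation forces $Y(\sigma(\varphi_0), T^*) = 2O - O = O$ as well, producing two distinct parameter values mapped to a single point of the plane and contradicting the embeddedness of $Y(\cdot, T^*)$ provided by Grayson's theorem. Consequently $\rho(\cdot, t) \geq c > 0$ uniformly on $[0, T^*]$. \emph{Case~2}: $\rho$ stays bounded below but the tangent to $Y(\cdot, T^*)$ becomes radial at some point, so $p$ reaches zero at an interior point $(s_*, T^*)$ with $Y(s_*, T^*) \neq O$. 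I would then invoke the evolution equation $\partial_t p = \partial_{ss} p + \kappa^2 p - 2\kappa$ satisfied by $p$ under CSF (\ref{eq:3.1.201909}): the spatial/temporal minimum inequalities $\partial_s p(s_*, T^*) = 0$, $\partial_{ss} p(s_*, T^*) \geq 0$, $\partial_t p(s_*, T^*) \leq 0$ force $\kappa(s_*, T^*) \geq 0$, and the identity $\partial_s p = \kappa \langle Y, T\rangle$ forces either $\langle Y, T\rangle = 0$ (which with $p = 0$ returns us to Case~1, since then $Y(s_*, T^*) = O$) or else the degenerate configuration $\kappa(s_*, T^*) = 0$.

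The hard part will be excluding this degenerate sub-case $\kappa(s_*, T^*) = 0$: centrosymmetry alone does not obstruct a pair of antipodal radial tangents on an embedded curve, so the contradiction must come from the parabolic structure. My plan is to apply a strong maximum principle or Hopf boundary-point lemma to the linear operator $\partial_t - \partial_{ss} - \kappa^2$ acting on $p$ in a one-sided parabolic neighborhood of $(s_*, T^*)$, using the uniform $C^\infty$ estimates on $[0, T^*]$ furnished by Grayson's theorem to control the coefficients; failing that, a higher-order vanishing analysis at $(s_*, T^*)$, successively forcing $\kappa, \kappa_s, \ldots$ to vanish, followed by a unique-continuation argument, would contradict $p(\cdot, 0) > 0$. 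Once both cases are eliminated, $p > 0$ persists at $t = T^*$ and therefore $T^* = A_0/(2\pi)$, which is exactly the assertion of Lemma~\ref{lem:3.2.201909}.
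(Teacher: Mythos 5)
Your reduction is sound up to a point, and your Case~1 is exactly the paper's central argument: if $\rho$ vanishes at some $(\theta_*,t_*)$, centrosymmetry (Lemma~\ref{lem:3.1.201909}) forces $\rho(\theta_*+\pi,t_*)=0$ as well, so two distinct parameter values hit $O$, contradicting embeddedness of the evolving curve (the paper cites Huisken's distance comparison / Corollary 3.2.4 of Gage--Hamilton). The genuine gap is your Case~2, which you yourself flag as ``the hard part'' and leave unresolved. The tool you propose does not work as stated: $p$ satisfies $\partial_t p=p_{ss}+\kappa^2 p-2\kappa$, so the operator $\partial_t-\partial_{ss}-\kappa^2$ applied to $p$ leaves the inhomogeneous term $-2\kappa$, and at the putative touching point you have already deduced $\kappa\geq 0$ (with nothing preventing $\kappa>0$ nearby); that term therefore pushes $p$ \emph{down}, so neither the strong maximum principle nor a Hopf boundary-point lemma protects positivity of $p$. (Hopf-type statements also concern the normal derivative at a boundary minimum, not an interior-in-space zero at the terminal time.) The fallback ``higher-order vanishing plus unique continuation'' is speculative, since $p$ solves an inhomogeneous equation coupled to $\kappa$.

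The paper closes this hole in a more elementary way that you are missing: it never analyzes the degenerate radial-tangency configuration at all. Assuming $\rho\geq\delta>0$ up to the critical time (your Case~2 hypothesis, the paper's (\ref{eq:3.9.201909})), it applies the maximum principle to $v=\frac12(\partial\rho/\partial\theta)^2$, whose evolution follows from (\ref{eq:3.3.201909}), and obtains the a priori gradient bound (\ref{eq:3.10.201909}) depending only on $\delta$, $t_*$ and the initial data. Hence $p=\rho^2/g_\rho$ is bounded below by a positive constant at $t_*$, so star-shapedness persists at and slightly beyond $t_*$, contradicting the definition of the critical time. In other words, as long as the curve stays away from $O$ the polar gradient cannot blow up, so the \emph{only} possible failure mode is your Case~1; Case~2 never occurs. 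If you prefer to keep your framework, Case~2 can alternatively be repaired either by citing Mantegazza's note (star-shapedness with respect to $O$ is preserved while $O$ stays enclosed), or by applying the maximum principle to the quantity $u=p+2t\kappa$, which satisfies $\partial_t u=u_{ss}+\kappa^2 u$ and hence stays bounded below by $\min p_0>0$, directly contradicting $p=\kappa=0$ at a single point; but as written your proposal does not contain a working argument for this case.
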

\begin{proof}
Suppose that there is a time $t_*\in [0, \frac{A_0}{2\pi})$ such that $Y(\cdot, t)$ is star-shaped
with respect to $O$ for $t\in [0, t_*)$ but $Y(\cdot, t_*)$ is not so.% $O$.
One can claim that
\begin{eqnarray}\label{eq:3.7.201909}
\rho(\theta, t)>0, \ \ \ \ (\theta, t) \in [0, 2\pi] \times [0, t_*),
\end{eqnarray}    %holds for $(\theta, t) \in [0, 2\pi] \times [0, t_*)$ and ???????
and
\begin{eqnarray}\label{eq:3.8.201909}
\rho_{\min}(t_*) :=\min\{\rho(\theta, t_*)| \theta \in [0, 2\pi]\}=0.
\end{eqnarray}
In fact, $Y(\cdot, t)$ is star-shaped with respect to $O$ for $t\in [0, t_*)$ so (\ref{eq:3.7.201909}) holds.
If (\ref{eq:3.8.201909}) does not hold then, for some $\delta >0$, one obtains
\begin{eqnarray}\label{eq:3.9.201909}
\rho_{\min}(t_*) \geq \delta.
\end{eqnarray}
By the evolution equation (\ref{eq:3.3.201909}), %one can calculate the evolution equation
the quantity $v=\frac{1}{2}\left(\frac{\partial \rho}{\partial \theta}\right)^2$ evolves according to
\begin{eqnarray*}
\frac{\partial v}{\partial t} &=& \frac{1}{(g_\rho)^2}\frac{\partial^2 v}{\partial \theta^2}
-\frac{1}{(g_\rho)^2}\left(\frac{\partial^2 \rho}{\partial \theta^2}\right)^2
-\frac{4v}{(g_\rho)^4}\left(\frac{\partial^2 \rho}{\partial \theta^2}\right)^2
-\frac{4}{\rho(g_\rho)^2}\frac{\partial \rho}{\partial \theta}\frac{\partial v}{\partial \theta}
\\
&& +\frac{8 v^2}{\rho^2(g_\rho)^2} + \frac{16 v^2}{(g_\rho)^4}
+\frac{8v}{\rho(g_\rho)^4} \frac{\partial \rho}{\partial \theta} \frac{\partial v}{\partial \theta}
 -\frac{2 v}{(g_\rho)^2} + \frac{4 \rho^2 v}{(g_\rho)^4}.
\end{eqnarray*}
At the point $(\theta_*, t)$ where $v(\theta, t)$ attains $v_{\max}(t)$, one gets
\begin{eqnarray*}
\frac{\partial^2 v}{\partial \theta^2} (\theta_*, t) \leq 0, ~~  ~~
~~\frac{\partial v}{\partial \theta} (\theta_*, t) = 0,
\end{eqnarray*}
and thus %Substituting the point $(\theta_*, t)$ into the equation of $v$, one obtains
\begin{eqnarray*}
\frac{\partial v}{\partial t} (\theta_*, t)
&\leq& \frac{8 v^2}{\rho^2(g_\rho)^2}(\theta_*, t) + \frac{16 v^2}{(g_\rho)^4}(\theta_*, t)
-\frac{2 v}{(g_\rho)^2}(\theta_*, t) + \frac{4 \rho^2 v}{(g_\rho)^4}(\theta_*, t)
\\
&\leq& \frac{4v}{\rho^2}(\theta_*, t)+4+0+\frac{1}{2}
\leq \frac{4v}{\delta^2}+\frac{9}{2},
\end{eqnarray*}
where $t\in [0, t_*)$. It follows from the maximum principle that
\begin{eqnarray*}
v_{\max}(t) \leq \left(v_{\max}(0) +\frac{9}{8}\delta^2\right)e^{\frac{4}{\delta^2}t_*},
\end{eqnarray*}
i.e.,
\begin{eqnarray}\label{eq:3.10.201909}
\left|\frac{\partial \rho}{\partial \theta}\right|_{\max}(t)
\leq \sqrt{\left(2v_{\max}(0) +\frac{9}{4}\delta^2\right)e^{\frac{4}{\delta^2}t_*}}.
\end{eqnarray}
By (\ref{eq:3.9.201909}) and (\ref{eq:3.10.201909}), the support function of $Y(\cdot, t) \left(\mbox{i.e.},
p_Y (\theta, t_*) = \frac{\rho^2(\theta, t_*)}{g_\rho(\theta, t_*)}\right)$ has a positive lower bound.
So $Y(\cdot, t)$ is star-shaped with respect to $O$ for $t\in [0, t_*+\varepsilon)$. This is a contradiction to the assumption
that $Y(\cdot, t_*)$ is not star-shaped with respect to $O$. Therefore, the claim (\ref{eq:3.8.201909})
holds and there exists a $\theta_* \in [0, 2\pi]$ such that
\begin{eqnarray}\label{eq:3.11.201909}
\rho(\theta_*, t_*) = 0.
\end{eqnarray}

By the choice of $t_* \in (0, \frac{A_0}{2\pi})$, the evolving curve $Y(\cdot, t_*)$ does not blow up.
By Lemma \ref{lem:3.1.201909}, $Y(\cdot, t)$ is centrosymmetric with respect to $O$. So
\begin{eqnarray}\label{eq:3.12.201909}
\rho(\theta_*+\pi, t_*) = 0.
\end{eqnarray}
The equations (\ref{eq:3.11.201909}) and (\ref{eq:3.12.201909}) contradict to Huisken's monotonic
formula \cite{Huisken-1998} or the fact that the evolving curve keeps embedded before it shrinks to a point
(see Corollary 3.2.4 of \cite{Gage-Hamilton-1986}).
Therefore, the equation (\ref{eq:3.8.201909}) can not hold. For every $t\in \left[0, \frac{A_0}{2\pi}\right)$,
the evolving curve $Y(\cdot, t)$ is star-shaped with respect to $O$.
\end{proof}
As a direct corollary of Lemma \ref{lem:3.1.201909} and Lemma \ref{lem:3.2.201909}, we have
\begin{corollary}\label{cor:3.3.201909}
Suppose the initial smooth curve $X_0$ is star-shaped and centrosymmetric with respect to $O$.
Under the CSF (\ref{eq:3.2.201909}), the evolving curve $Y(\cdot, t)$ shrinks to the point $O$
as $t \rightarrow \frac{A_0}{2\pi}$.
\end{corollary}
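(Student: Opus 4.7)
The plan is to combine the Gage-Hamilton-Grayson theorem with the symmetry information already established in Lemmas \ref{lem:3.1.201909} and \ref{lem:3.2.201909}. Since the flow (\ref{eq:3.2.201909}) differs from the standard CSF (\ref{eq:3.1.201909}) only by an added tangential term, the trace of $Y(\cdot,t)$ and its area-shrinking behaviour coincide with those of the standard CSF. Grayson's theorem then guarantees that $Y(\cdot,t)$ becomes strictly convex in finite time, and the Gage-Hamilton theorem ensures it contracts asymptotically to a single point $P \in \mathbb{R}^2$ as $t \to A_0/(2\pi)$; in particular $\mathrm{diam}(Y(\cdot,t)) \to 0$. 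The only task that remains is to identify this limit point $P$ with $O$.

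To pin down $P$, I would use Lemma \ref{lem:3.2.201909}: for every $t \in [0, A_0/(2\pi))$ the evolving curve is star-shaped with respect to $O$, so $O$ lies in the open bounded region $\Omega_t$ enclosed by $Y(\cdot,t)$. Since $O \in \Omega_t$ for all such $t$, any point $q$ on $Y(\cdot,t)$ satisfies $|q - O| \le \mathrm{diam}(\Omega_t) \to 0$ as $t \to A_0/(2\pi)$. This forces the shrinking point to satisfy $P = O$, which is exactly the claim.

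A cleaner alternative uses the centrosymmetry from Lemma \ref{lem:3.1.201909} directly: if $q \in Y(\cdot,t)$, then its antipode through $O$ also lies on the curve, so $|q - O| \le \tfrac{1}{2}\mathrm{diam}(Y(\cdot,t)) \to 0$, giving the same conclusion without reference to star-shapedness. There is no real obstacle here; the main conceptual point is simply that the two symmetry lemmas localize the Gage-Hamilton-Grayson shrinking point at $O$, and the corollary follows immediately.
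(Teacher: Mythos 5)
Your proposal is correct and is essentially the paper's own argument: the paper treats the corollary as an immediate consequence of Lemmas \ref{lem:3.1.201909} and \ref{lem:3.2.201909} combined with the Gage--Hamilton--Grayson theorem recalled at the start of Subsection \ref{subsec:3.1.201909}, which is exactly the combination you spell out (symmetry/star-shapedness with respect to $O$ forces the shrinking point to be $O$). One small caveat: your ``cleaner alternative'' does not really avoid star-shapedness, since the centrosymmetry of $Y(\cdot,t)$ from Lemma \ref{lem:3.1.201909} is conditional on the curve being star-shaped with respect to $O$, which is supplied by Lemma \ref{lem:3.2.201909}.
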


In 2010, Mantegazza in his note \cite{Mantegazza-2010} showed that
a star-shaped (with respect to $O$) initial curve remains so under
the CSF till the point $O$ is contained in the open region bounded by the evolving curve.
%In the year 2015, the first author conjectured that there exists a smooth closed embedded and
%star-shaped inial curve under the CSF will lost the star-shapedness in the evolution process.

\begin{remark}\label{rem:3.4.201909}
There exist some smooth, closed, star-shaped but not embedded curves which
%such that the CSFs with these initial curves do not always preserve the star shape of evolving curves.
do not always preserve the star-shapedness of evolving curves under the CSF.
Figure \ref{fig:1} presents such a curve $X_0$ which is of positive curvature everywhere and
is star-shaped with respect to $O$, but the two small loops shrink so that
they do not intersect each other after some seconds under the CSF. This makes the evolving curve
no longer star shaped. This example also indicates that the embeddedness %condition
of the initial curve in Theorem \ref{thm:1.1.201909} is so important that it can not be omitted.
\end{remark}

\begin{figure}[tbh]
\centering
\includegraphics[scale=0.8]{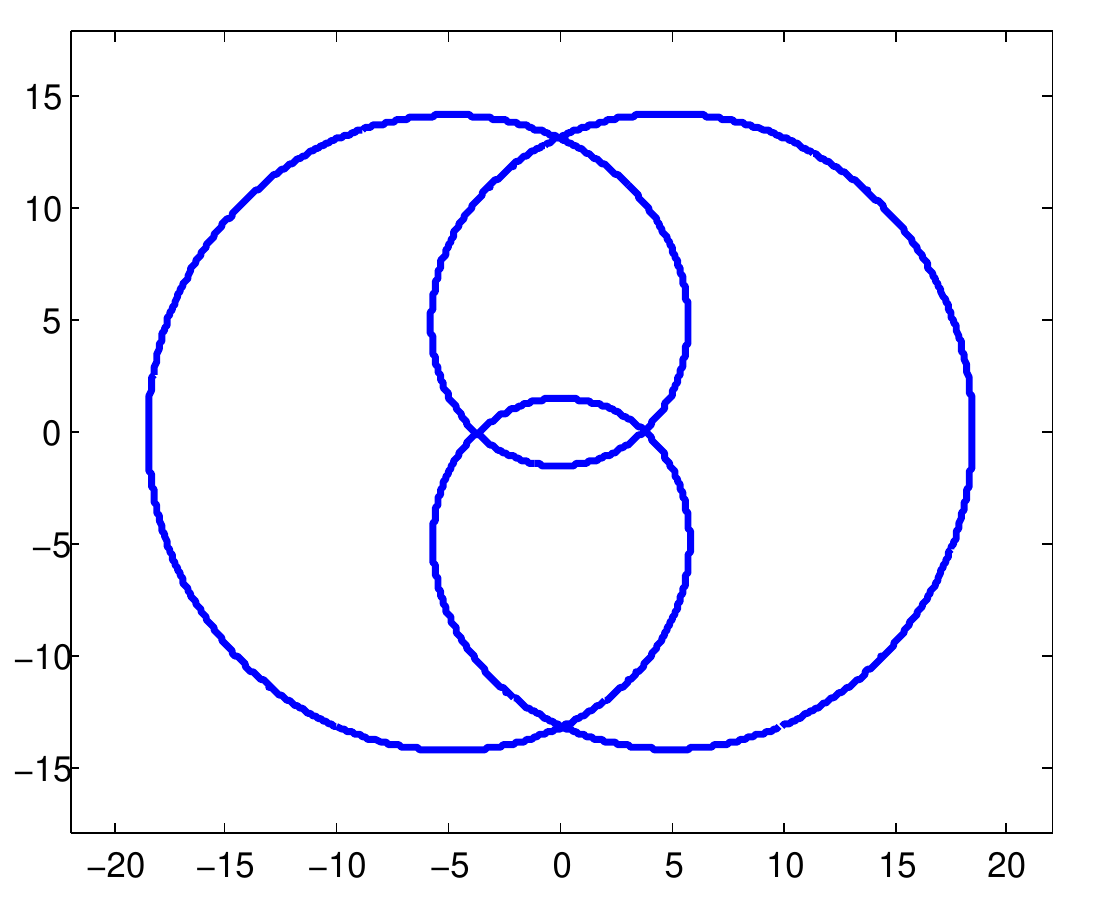}
\caption{An Immersed Star-shaped Curve $X_0$.}\label{fig:1}
\end{figure}

\subsection{Star-shapedness of the evolving curve under the flow (\ref{eq:1.1.201909})}\label{subsec:3.2.201909}

It is proved in this subsection that Gage's area-preserving flow (\ref{eq:1.1.201909}) preserves
the star-shapedness of the evolving curve if the initial smooth curve $X_0$ is both centrosymmetric
and star-shaped with respect to the origin $O$.

\begin{lemma}\label{lem:3.4.201909}
Suppose the initial smooth curve $X_0$ is star-shaped and centrosymmetric with respect to $O$ and
Gage's area-preserving flow (\ref{eq:1.1.201909}) exists on the time interval $[0, \omega)$. If the
evolving curve $X(\cdot, t)$ under (\ref{eq:1.1.201909}) is star-shaped with respect to $O$ for all
$t\in [0, \omega)$ then it is centrosymmetric with respect to $O$.
\end{lemma}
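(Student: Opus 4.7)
The plan is to mirror the argument of Lemma \ref{lem:3.1.201909} for the CSF, adapting it to the nonlocal flow (\ref{eq:1.1.201909}). Since the evolving curve $X(\cdot,t)$ is assumed star-shaped with respect to $O$ on $[0,\omega)$, Lemma \ref{lem:2.1.201909} allows us to write $X(\theta,t)=r(\theta,t)P(\theta)$ with $r>0$ smooth, and the radial function satisfies the quasilinear parabolic equation (\ref{eq:2.5.201909}).

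First I would introduce the ``rotated'' radial function
\[
\widetilde r(\theta,t):=r(\theta+\pi,t),
\]
and verify that it solves the same equation (\ref{eq:2.5.201909}). The point-wise terms involving $r, \partial_\theta r, \partial_\theta^2 r$ are manifestly invariant under the shift $\theta\mapsto\theta+\pi$, and the only nonlocal quantity is the length $L(t)=\int_0^{2\pi}\sqrt{r^2+(\partial_\theta r)^2}\,d\theta$. Since this integral is over the full period, changing variables $\theta\mapsto\theta+\pi$ shows that the length associated to $\widetilde r$ coincides with $L(t)$. Hence $\widetilde r$ and $r$ satisfy the same PDE with the same (non-local) coefficient $L(t)$.

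Next, following the pattern of Lemma \ref{lem:3.1.201909}, I would set
\[
\psi(\theta,t):=\widetilde r(\theta,t)-r(\theta,t),
\]
so that the centrosymmetry of $X_0$ gives $\psi(\theta,0)\equiv 0$. Subtracting the two equations and pulling out factors of $\psi$, $\partial_\theta\psi$, $\partial_\theta^2\psi$ (via the elementary identities $a^2-b^2=(a+b)(a-b)$, $\frac{1}{a^2}-\frac{1}{b^2}=-\frac{(a+b)(a-b)}{a^2b^2}$ applied to $g_r$ and $g_{\widetilde r}$, and similar manipulations for the term $2\pi g/(rL)$) yields a linear parabolic equation of the form
\[
\frac{\partial \psi}{\partial t}=A(\theta,t)\,\frac{\partial^2\psi}{\partial\theta^2}
+B(\theta,t)\,\frac{\partial\psi}{\partial\theta}
+C(\theta,t)\,\psi,
\]
with $A=1/g_{\widetilde r}^2>0$. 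The coefficients $A,B,C$ are built rationally from $r,\widetilde r$ and their $\theta$-derivatives up to second order, together with $L(t)$; by the standing assumption that the flow exists smoothly on $[0,\omega)$ and that $X(\cdot,t)$ is star-shaped (so $r$ is bounded below by a positive constant on each $[0,t_*]\subset[0,\omega)$), these coefficients are smooth and bounded on $[0,2\pi]\times[0,t_*]$.

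With that in hand, the conclusion is immediate: the zero function is a solution of this linear parabolic Cauchy problem on the torus $S^1\times[0,t_*]$, and uniqueness for linear parabolic equations with smooth bounded coefficients forces $\psi\equiv 0$, i.e.\ $r(\theta+\pi,t)=r(\theta,t)$ for all $\theta$ and all $t\in[0,t_*]$. Since $t_*<\omega$ is arbitrary, this yields centrosymmetry on $[0,\omega)$. The main technical point—and the only real obstacle—is carrying out the subtraction cleanly so that every difference of nonlinear terms is displayed as a product of a bounded coefficient with $\psi$ or one of its derivatives, in particular handling the nonlocal piece $2\pi g/(rL)$, where one must split it as $(2\pi/L)\bigl(g/r\bigr)$ and use that $L$ is the same for $r$ and $\widetilde r$ so that only the local factor $g/r$ contributes to the difference.
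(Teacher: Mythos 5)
Your proposal is correct and follows essentially the same route as the paper: write $\widetilde r(\theta,t)=r(\theta+\pi,t)$, note that the nonlocal coefficient $L(t)$ is unchanged under the shift, subtract the two copies of (\ref{eq:2.5.201909}) to obtain a linear uniformly parabolic equation for the difference with coefficients that are smooth and bounded on each $[0,2\pi]\times[0,t_*]$ (thanks to the positive lower bound on $r$ from star-shapedness and the a priori derivative bounds), and conclude by uniqueness from the zero initial value. This is exactly the paper's argument (its $\varphi_2$ is your $\psi$), so nothing further is needed.
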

\begin{proof}
For every $t_*\in [0, \omega)$, the radial function $r(\theta, t)$ evolves according to the equation
(\ref{eq:2.5.201909}) under Gage's area-preserving flow (\ref{eq:1.1.201909}). By assumption,
$X(\cdot, 0)$ is star-shaped with respect to $O$, there exists a constant $c=c(t_*)>0$ such that
for all $(\theta, t)\in [0, 2\pi] \times [0, t_*)$,
\begin{eqnarray}\label{eq:3.13.201909}
r(\theta, t) \geq c(t_*).
\end{eqnarray}
Lemma \ref{lem:2.4.201909} tells us that (\ref{eq:2.6.201909}) and (\ref{eq:2.7.201909}) hold for $t\in [0, t_*]$.
By the classical theory of parabolic equations \cite{Lieberman-1996}, there exist constants $C_i(t_*)>0$
such that
\begin{eqnarray}\label{eq:3.14.201909}
\left|\frac{\partial^i r}{\partial \theta^i}(\theta, t)\right| \leq C_i(t_*), \ \ \ \
(\theta, t) \in [0, 2\pi] \times [0, t_*],\ \ \ i=1, 2, \cdots.
\end{eqnarray}

Set $\widetilde{r}(\theta, t):=r(\theta+\pi, t)$. By (\ref{eq:2.5.201909}), the function
$\varphi_2(\theta, t):= \widetilde{r}(\theta, t)- r(\theta, t)$ satisfies a
linear, uniformly parabolic equation which has smooth coefficients (similar to
the equation (\ref{eq:3.5.201909})) and involves a non-local quantity $L=L(t)$ of the curve,
\begin{eqnarray*}
\frac{\partial \varphi_2}{\partial t}
&=& \frac{1}{(g_{\widetilde{r}})^2}\frac{\partial^2 \varphi_2}{\partial \theta^2}
-\frac{1}{g^2 (g_{\widetilde{r}})^2}\frac{\partial^2 r}{\partial \theta^2}
           \left(\frac{\partial r}{\partial \theta} + \frac{\partial \widetilde{r}}{\partial \theta}\right)
           \frac{\partial \varphi_2}{\partial \theta}
-\frac{2}{r g^2}\left(\frac{\partial r}{\partial \theta}
+\frac{\partial \widetilde{r}}{\partial \theta}\right)\frac{\partial \varphi_2}{\partial \theta}
\nonumber\\
&& +\frac{2}{rg^2(g_{\widetilde{r}})^2}\left(\frac{\partial \widetilde{r}}{\partial \theta}\right)^2 \left(\frac{\partial r}{\partial \theta}
+\frac{\partial \widetilde{r}}{\partial \theta}\right)\frac{\partial \varphi_2}{\partial \theta}
+\frac{\widetilde{r}}{g^2 (g_{\widetilde{r}})^2}
    \left(\frac{\partial r}{\partial \theta} + \frac{\partial \widetilde{r}}{\partial \theta}\right)\frac{\partial \varphi_2}{\partial \theta}
\nonumber\\
&& -\frac{\widetilde{r} + r}{g^2(g_{\widetilde{r}})^2} \frac{\partial^2 r}{\partial \theta^2} \varphi_2
+\frac{2(\widetilde{r} + r)}{r g^2(g_{\widetilde{r}})^2} \left(\frac{\partial \widetilde{r}}{\partial \theta}\right)^2 \varphi_2
+\frac{2}{r \widetilde{r} (g_{\widetilde{r}})^2} \left(\frac{\partial \widetilde{r}}{\partial \theta}\right)^2 \varphi_2
\nonumber\\
&&
-\frac{\varphi_2}{(g_{r})^2}
+\frac{\widetilde{r}(\widetilde{r} + r)}{g^2(g_{\widetilde{r}})^2}\varphi_2
+\frac{2\pi(\widetilde{r}+r)}{\widetilde{r}L(g_{\widetilde{r}}+g)}\varphi_2
\nonumber\\
&&
+\frac{2\pi}{\widetilde{r} L(g+g_{\widetilde{r}})} \left(\frac{\partial \widetilde{r}}{\partial \theta}
                                + \frac{\partial r}{\partial \theta}\right) \frac{\partial \varphi_2}{\partial \theta}
-\frac{2\pi g}{\widetilde{r} rL}\varphi_2,
\end{eqnarray*}
where $g_{\widetilde{r}}:=\sqrt{\widetilde{r}^2+ \left(\frac{\partial \widetilde{r}}{\partial \theta}\right)^2}$.
Since $\varphi_2(\theta, 0)\equiv 0$, one can obtain %it follows from the uniqueness of the solution to linear parabolic equations that
%\begin{eqnarray*}
$\varphi_2(\theta, t) \equiv 0,$%\end{eqnarray*}
that is to say, the evolving curve $X(\cdot, t)$ is symmetric with respect to $O$.
\end{proof}

\begin{corollary}\label{cor:3.5.201909}
Suppose the initial smooth curve $X_0$ is star-shaped and centrosymmetric with respect to $O$.
If the evolving curve $X(\cdot, t)$ under Gage's area-preserving flow (\ref{eq:1.1.201909}) is star-shaped
then $O$ is one of its star centers.
\end{corollary}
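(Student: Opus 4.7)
The plan is twofold. First, I establish that the evolving curve $X(\cdot,t)$ remains centrosymmetric about $O$ for every $t\in[0,\omega)$, by an equivariance-plus-uniqueness argument that does \emph{not} require star-shapedness with respect to $O$. Second, I deduce from centrosymmetry and the hypothesized star-shapedness (from some unspecified center) that $O$ itself must be a star center, by invoking the convexity of the kernel of a plane star-shaped region.

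For the first step, the flow (\ref{eq:1.1.201909}) is geometric: the curvature $\kappa$, the inward unit normal $N$, and the length $L$ are all invariant under Euclidean isometries. Let $\Phi$ denote the rotation by $\pi$ about $O$, so $\Phi(x)=-x$. After choosing the initial parametrization so that $X_0(\varphi+\pi)=-X_0(\varphi)$ (possible because $X_0$ is centrosymmetric), set $\widetilde{X}(\varphi,t):=-X(\varphi+\pi,t)$. By the $\Phi$-equivariance of (\ref{eq:1.1.201909}), $\widetilde{X}$ is again a smooth solution on $S^1\times[0,\omega)$, and $\widetilde{X}(\cdot,0)\equiv X_0$. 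The uniqueness part of Lemma \ref{lem:2.2.201909}, extended to the full existence interval by the usual open-and-closed continuation argument, then yields $\widetilde{X}\equiv X$. Hence $X(\varphi+\pi,t)=-X(\varphi,t)$ for all $(\varphi,t)$, so $X(\cdot,t)$ is centrosymmetric about $O$.

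For the second step, let $R_t$ be the open region enclosed by $X(\cdot,t)$ and let $K_t$ denote its kernel, i.e.\ the set of star centers of $R_t$. By hypothesis $K_t\neq\emptyset$; pick any $P\in K_t$. Centrosymmetry gives $R_t=\Phi(R_t)$, so $\Phi(P)=2O-P$ is also a star center of $R_t$, i.e.\ $2O-P\in K_t$. The kernel $K_t$ is convex: for any $P,Q\in K_t$ and any $x\in R_t$, the entire triangle with vertices $P,Q,x$ lies in $R_t$, because any point of the triangle can be written as a convex combination $\alpha P+(1-\alpha)y$ with $y$ on the segment $[Q,x]\subseteq R_t$, which then lies in $R_t$ since $P$ is a star center. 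In particular every segment from a point on $PQ$ to $x$ is contained in $R_t$, so $PQ\subseteq K_t$. Taking the midpoint, $O=\tfrac12\bigl(P+(2O-P)\bigr)\in K_t$, which is precisely the claim that $O$ is a star center of $X(\cdot,t)$.

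The main subtlety is the equivariance step: Lemma \ref{lem:3.4.201909} cannot be applied directly because it presupposes star-shapedness with respect to $O$, which is exactly what we wish to establish. We sidestep this by working with the parametrized flow (\ref{eq:1.1.201909}) itself rather than its polar reduction (\ref{eq:2.5.201909}), so that the reflection $\Phi$ acts cleanly on the space of solutions. Once centrosymmetry is in hand, the convexity-of-kernel step is a standard planar-geometry fact and the conclusion follows at once.
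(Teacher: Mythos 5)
Your argument is correct in substance, and its second half coincides with the paper's: the paper likewise takes a star center $P$, notes that centrosymmetry makes $-P$ a star center (citing Smith), and uses convexity of the set of star centers to place $O$ among them. Where you genuinely diverge is in how centrosymmetry of $X(\cdot,t)$ is obtained. The paper never proves symmetry without star-shapedness: it argues by contradiction at the first time $t_*$ at which the curve is star-shaped but not with respect to $O$; on $[0,t_*)$ star-shapedness with respect to $O$ holds by the choice of $t_*$, so Lemma \ref{lem:3.4.201909} (a linear parabolic equation for $r(\theta+\pi,t)-r(\theta,t)$ in the polar reduction) propagates the symmetry, continuity carries it to $t_*$, and a short extra step rules out $r_{\min}(t_*)=0$ before the kernel argument is applied. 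Your equivariance-plus-uniqueness argument is cleaner, avoids the first-failure-time bookkeeping, and yields symmetry for all times at once; but be aware that it leans on uniqueness of the parametrized flow (\ref{eq:1.1.201909}) for curves that need not be star-shaped, whereas the paper's Lemma \ref{lem:2.2.201909} obtains uniqueness only through the polar reduction of Lemma \ref{lem:2.1.201909}, i.e.\ under star-shapedness. To stay within the paper's toolkit you should either invoke the standard short-time uniqueness theory for normal-velocity curvature flows, or run your open-and-closed continuation only over times at which $X(\cdot,t)$ is star-shaped about some point (which is exactly the corollary's hypothesis), re-centering the polar reduction at such a point to get openness. Finally, your kernel-convexity step gives star-shapedness of the enclosed region, while the paper's definition asks for the strict condition $\det(X-O,T)>0$; this follows immediately from linearity of the determinant in its first argument applied to the star centers $P$ and $-P$, a remark the paper's own proof also leaves implicit.
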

\begin{proof}
Suppose Gage's area-preserving flow (\ref{eq:1.1.201909}) with initial $X_0$ exists on a time
interval $[0, \omega)$. By continuity, there exists a small $t_0\in [0, \omega)$ such that $X(\cdot, t)$ is
star-shaped with respect to $O$ if $t<t_0$.  Lemma \ref{lem:3.4.201909} implies that $X(\cdot, t)$ is
symmetric with respect to $O$ if $t<t_0$.

Suppose there exists a $t_*\in (0, \omega)$ such that (i) $X(\cdot, t_*)$ is star-shaped with respect to
some point but not %star-shaped with respect to
the origin $O$ and (ii) $X(\cdot, t)$ is star-shaped with respect to $O$ for all $t\in [0, t_*)$.
By Lemma \ref{lem:3.4.201909} and the continuity of $X(\cdot, t)$, $X(\cdot, t_*)$ is centrosymmetric with
respect to $O$. So one can conclude that
\begin{eqnarray*}
r_{\min}(t_*) > 0.
\end{eqnarray*}
Otherwise, $X(\cdot, t_*)$ is not star-shaped with respect to any point because it is centrosymmetric with
respect to $O$.

Let $P$ be a star center of $X(\cdot, t_*)$, then the symmetry of the curve implies that $-P$ is also
a star center (see \cite{Smith-1968}). Since the set of all star centers of $X(\cdot, t_*)$ is
convex, $O$ is one of its star centers, which leads to a contradiction.
\end{proof}

Next we shall show that Gage's area-preserving flow (\ref{eq:1.1.201909}) preserves star-shapedness of the
evolving curve.

\begin{lemma}\label{lem:3.6.201909}
Suppose the initial smooth curve $X_0$ is star-shaped and centrosymmetric with respect to $O$ and
Gage's area-preserving flow (\ref{eq:1.1.201909}) exists on a time interval $[0, \omega)$.  Then the
evolving curve $X(\cdot, t)$ is star-shaped with respect to $O$ for all $t\in [0, \omega)$.
\end{lemma}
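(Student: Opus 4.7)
My plan is to argue by contradiction using a maximal time. Set
\[ t_* := \sup\{s \in [0, \omega) : X(\cdot, t) \text{ is star-shaped with respect to } O \text{ for all } t \in [0, s]\}. \]
Lemma \ref{lem:2.2.201909} and continuity of the flow give $t_* > 0$. Assume $t_* < \omega$; I will derive a contradiction at $t = t_*$. For $t \in [0, t_*)$ Lemma \ref{lem:3.4.201909} and Corollary \ref{cor:3.5.201909} yield that $X(\cdot, t)$ is centrosymmetric with respect to $O$, and by continuity $X(\cdot, t_*)$ is centrosymmetric too. Lemma \ref{lem:2.4.201909} shows that, as long as $r_{\min}$ is bounded below by a positive constant on $[0, t_*]$, the quantity $|\partial r/\partial\theta|$ is uniformly bounded, so the support function $p = r^2/g$ stays positive and $X(\cdot, t_*)$ is star-shaped about $O$; then continuity extends star-shapedness slightly past $t_*$, contradicting maximality. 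Hence everything reduces to proving $r_{\min}(t_*) > 0$.

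\textbf{Comparison with the CSF.} Set $T_0 := A_0/(2\pi)$ and pick $s \in [\max(0, t_* - T_0/2), t_*)$. Since $X(\cdot, s)$ is smooth, star-shaped, and centrosymmetric with respect to $O$, and since GAPF preserves area, Lemma \ref{lem:3.2.201909} and Corollary \ref{cor:3.3.201909} apply to the CSF (\ref{eq:3.2.201909}) with $X(\cdot, s)$ as initial curve: one obtains a star-shaped solution $\rho_s(\theta, t)$ on $[s, s+T_0)$ with $\rho_s > 0$ throughout. Comparing (\ref{eq:2.5.201909}) and (\ref{eq:3.3.201909}), the GAPF equation for $r$ exceeds the CSF equation for $\rho_s$ exactly by the positive source $+2\pi g /(rL)$. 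At any point $(\theta_0, t_0)$ where $\phi := r - \rho_s$ attains a spatial minimum of value $0$, one has $r_\theta = (\rho_s)_\theta$, so $g_r = g_{\rho_s}$, and $r_{\theta\theta} \geq (\rho_s)_{\theta\theta}$, which yields
\[ \phi_t(\theta_0, t_0) = \frac{r_{\theta\theta} - (\rho_s)_{\theta\theta}}{g_r^2} + \frac{2\pi g_r}{r L} > 0. \]
A standard first-touching argument (or, equivalently, a perturbation $\phi + \varepsilon t$) then gives $r \geq \rho_s$ on $[s, \min(\omega, s+T_0))$.

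\textbf{Conclusion.} Because $t_* - s \leq T_0/2 < T_0$, the comparison applies at $t = t_*$ and yields $r(\cdot, t_*) \geq \rho_s(\cdot, t_*) > 0$; thus $r_{\min}(t_*) > 0$, contradicting the assumed failure of star-shapedness at $t_*$. Therefore $t_* = \omega$, and $X(\cdot, t)$ remains star-shaped with respect to $O$ on all of $[0, \omega)$.

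\textbf{Main obstacle.} The principal technical point is the comparison principle for the fully nonlinear, nonlocal equation (\ref{eq:2.5.201909}) against (\ref{eq:3.3.201909}). The key observation that makes it work is that both flows are reparametrized to use the polar angle $\theta$ as the spatial variable, so at a touching point of $\phi$ every nonlinear term involving $r$, $r_\theta$, $g_r$ aligns exactly with its counterpart for $\rho_s$, leaving only the strictly positive source $2\pi g/(rL)$ as the net difference—even though $L = L(t)$ is a nonlocal quantity. Restarting the comparison from any $s$ with $t_* - s < T_0$ handles the case $t_* \geq T_0$ without any additional work, so a single application of the comparison suffices.
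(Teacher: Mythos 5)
Your argument is correct and essentially identical to the paper's: the paper likewise argues at a first non-star-shaped time $t_*$, restarts the CSF from the curve at a slightly earlier time $t_*-\varepsilon_0$, uses Lemma \ref{lem:3.1.201909}, Lemma \ref{lem:3.2.201909} and Corollary \ref{cor:3.3.201909} to get a positive lower bound $\rho\geq\delta(t_*)$ valid beyond $t_*$, and then deduces $r\geq\rho$ from the extra positive source $2\pi g/(rL)$ distinguishing (\ref{eq:2.5.201909}) from (\ref{eq:3.3.201909}). The only (minor) difference is how the comparison is justified: the paper writes $\varphi_3=r-\rho$ as a solution of the linear parabolic equation (\ref{eq:3.16.201909}) with bounded coefficients and applies the linear maximum principle, which is the rigorous form of your touching-point step (at a first zero of the $\varepsilon$-perturbed difference the nonlinear terms no longer cancel exactly, so one needs this linearization/Lipschitz control, available on compact subintervals of $[s,t_*)$ where $r>0$ and $\rho\geq\delta$).
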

\begin{proof}
Suppose there is a $t_*\in (0, \omega)$ such that $X(\cdot, t)$ is star-shaped for $t\in [0, t_*)$
but $X(\cdot, t_*)$ is not so. It follows from Corollary \ref{cor:3.5.201909}
that $X(\cdot, t)$ is star-shaped with respect to $O$
for all $t\in [0, t_*)$. By Lemma \ref{lem:3.4.201909}, $X(\cdot, t)$ is centrosymmetric with
respect to $O$ on the same time interval $[0, t_*)$. Let
$\varepsilon_0 = \min\left\{\frac{t_*}{2}, \frac{A_0}{4\pi}\right\}$, then $X(\cdot, t_*-\varepsilon_0)$
is a centrosymmetric and star-shaped curve with respect to $O$.

Let $X(\cdot, t_*-\varepsilon_0)$ evolve according to the CSF, then we obtain a family of smooth
curves $Y(\cdot, t)$ for $t\in \left[t_*-\varepsilon_0, t_*-\varepsilon_0+\frac{A_0}{2\pi}\right).$
Write $Y(\theta, t)= \rho(\theta, t) P(\theta)$,
where $\theta$ is the polar angle independent of $t$.
By Lemma \ref{lem:3.1.201909}, Lemma \ref{lem:3.2.201909} and Corollary \ref{cor:3.3.201909},
$Y(\cdot, t)$ is star-shaped and centrosymmetric with respect to $O$ and it shrinks to $O$
as $t$ tends to the time $\left(t_*-\varepsilon_0+\frac{A_0}{2\pi}\right)$. Therefore, there exists
a $\delta = \delta (t_*) >0$ such that
\begin{eqnarray}\label{eq:3.15.201909}
\rho(\theta, t) \geq \delta(t_*)
\end{eqnarray}
for all $(\theta, t) \in [0, 2\pi] \times \left[t_*-\varepsilon_0, t_*+\frac{3A_0}{8\pi}\right]$.

Under Gage's area-preserving flow, the radial function $r(\theta, t)$ of $X(\theta, t)$
satisfies the equation (\ref{eq:2.5.201909}) for $t\in [t_*-\varepsilon_0, t_*)$.
Set $\varphi_3 (\theta, t)= r(\theta, t) - \rho(\theta, t)$, where $(\theta, t) \in [0, 2\pi] \times [t_*-\varepsilon_0, t_*)$.
By (\ref{eq:2.5.201909}) and (\ref{eq:3.3.201909}), $\varphi_3$ evolves according to
\begin{eqnarray}
\frac{\partial \varphi_3}{\partial t}
&=& \frac{1}{g^2}\frac{\partial^2 \varphi_3}{\partial \theta^2}
-\frac{1}{(g_\rho)^2 g^2}\frac{\partial^2 \rho}{\partial \theta^2}
           \left(\frac{\partial \rho}{\partial \theta} + \frac{\partial r}{\partial \theta}\right)
           \frac{\partial \varphi_3}{\partial \theta}
-\frac{2}{\rho (g_\rho)^2}\left(\frac{\partial \rho}{\partial \theta}
+\frac{\partial r}{\partial \theta}\right)\frac{\partial \varphi_3}{\partial \theta}
\nonumber\\
&& +\frac{2}{\rho (g_\rho)^2 g^2}\left(\frac{\partial \rho}{\partial \theta}\right)^2 \left(\frac{\partial \rho}{\partial \theta}
+\frac{\partial r}{\partial \theta}\right)\frac{\partial \varphi_3}{\partial \theta}
+\frac{r}{(g_\rho)^2 g^2}
    \left(\frac{\partial \rho}{\partial \theta} + \frac{\partial r}{\partial \theta}\right)\frac{\partial \varphi_3}{\partial \theta}
\nonumber\\
&& -\frac{r + \rho}{(g_\rho)^2 g^2} \frac{\partial^2 \rho}{\partial \theta^2} \varphi_3
+\frac{2(r + \rho)}{\rho (g_\rho)^2 g^2} \left(\frac{\partial \rho}{\partial \theta}\right)^2 \varphi_3
+\frac{2}{\rho r g^2} \left(\frac{\partial \rho}{\partial \theta}\right)^2 \varphi_3
\nonumber\\
&&
-\frac{\varphi_3}{g^2}
+\frac{r(r + \rho)}{(g_\rho)^2 g^2}\varphi_3
+\frac{2\pi g}{rL}.
\label{eq:3.16.201909}
\end{eqnarray}

Noticing that $\frac{2\pi g}{rL}>0$ and (\ref{eq:3.16.201909}) is a linear parabolic equation with smooth and bounded
coefficients, the maximum principle implies that there exists a constant $\widetilde{c} >0$ such that for
$t\in [t_*-\varepsilon_0, t_*)$,
\begin{eqnarray*}
\varphi_3 (\theta, t) \geq \min\{\varphi_3 (\theta, t_*-\varepsilon_0)| \theta\in [0, 2\pi]\} e^{-\widetilde{c} t}.
\end{eqnarray*}
Since $\varphi_3 (\theta, t_*-\varepsilon_0) \equiv 0$, we have
$\varphi_3 (\theta, t) \geq 0$. Thus
\begin{eqnarray} \label{eq:3.17.201909}
r (\theta, t_*) \geq \rho(\theta, t_*) \geq \delta(t_*) >0.
\end{eqnarray}
By Lemma \ref{lem:3.4.201909} and Corollary \ref{cor:3.5.201909}, $X(\cdot, t_*)$ is star-shaped, which
contradicts with the assumption.
\end{proof}

\subsection{Extending Gage's area-preserving flow}\label{subsec:3.3.201909}
Now let us extend Gage's area-preserving flow (\ref{eq:1.1.201909}). Once the curvature of the evolving curve is
bounded on the time interval $[0, T_*)$ for any $T_*>0$, one can prove the smoothness of $X(\cdot, t)$ and
the flow (\ref{eq:1.1.201909}) can be extended globally.

Suppose the initial smooth curve $X_0$ is star-shaped and centrosymmetric with respect to $O$ and
Gage's area-preserving flow (\ref{eq:1.1.201909}) with initial $X_0$ exists on the time interval $[0, T_*)$, where
$T_*>0$ is a finite number. By Lemma \ref{lem:3.4.201909}, Corollary \ref{cor:3.5.201909} and
Lemma \ref{lem:3.6.201909}, the evolving curve $X(\cdot, t)$ is star-shaped and centrosymmetric with respect to $O$.
Let $\varepsilon_0 = \min\{\frac{T_*}{2}, \frac{A_0}{4\pi}\}$. $X(\cdot, t)$ is star-shaped
 with respect to $O$ for $t\in [0, T_*)$, so there exists a $\delta_1=\delta_1(T_*)>0$ such that $
%\begin{eqnarray*}
r (\theta, t) \geq \delta_1
%\end{eqnarray*}
$ holds for $t\in [0, T_*-\varepsilon_0]$. By the proof of Lemma \ref{lem:3.6.201909}, there exists
a $\delta_2=\delta_2 (T_*)>0$ such that $
%\begin{eqnarray*}
r (\theta, t) \geq \delta_2
$ %\end{eqnarray*}
for $t\in [T_*-\varepsilon_0, T_*+\frac{A_0}{8\pi}]$. Let $\delta = \delta (T_*)=\min\{\delta_1, \delta_2\}>0$, 
then for all $(\theta, t) \in [0, 2\pi] \times [0, T_*+\frac{A_0}{8\pi})$, one can obtain
\begin{eqnarray} \label{eq:3.18.201909}
r (\theta, t) \geq \delta >0,
\end{eqnarray}
which together with Lemma \ref{lem:2.4.201909} and Lemma \ref{lem:2.5.201909} tells us that 
if $t\in\left[0, T_*+\frac{A_0}{8\pi}\right)$, then the evolving curve $X(\cdot, t)$ is star-shaped 
with respect to $O$. So there exists an $h=h(T_*)>0$ such that the ``support function" with respect 
to $O$ satisfies 
\begin{eqnarray} \label{eq:3.19.201909}
p (\theta, t) \geq 2h(T_*) >0, \ \ \ \ 
(\theta, t) \in [0, 2\pi] \times \left[0, T_*+\frac{A_0}{8\pi}\right).
\end{eqnarray}

Under the flow (\ref{eq:1.1.201909}), the curvature evolves according to
\begin{eqnarray}\label{eq:3.20.201909}
\frac{\partial \kappa}{\partial t}=\frac{\partial^2 \kappa}{\partial s^2}+\kappa^3-\frac{2\pi}{L}\kappa^2,
\end{eqnarray}
where $s$ stands for the arc length parameter. Define
\begin{eqnarray*}
\varphi_4(s, t)= \frac{\kappa (s, t)}{p(s, t)- h(T_*)}, \ \ \ \ \ \ t\in [0, T_*),
\end{eqnarray*}
%where $t\in [0, T_*)$.
we have
\begin{eqnarray*}
&&\frac{\partial \varphi_4}{\partial s} = \frac{1}{p-h}\frac{\partial \kappa}{\partial s}
   -\frac{\kappa}{(p-h)^2}\frac{\partial p}{\partial s},\\
&& \frac{\partial^2 \varphi_4}{\partial s^2} = \frac{1}{p-h}\frac{\partial^2 \kappa}{\partial s^2}
   -\frac{2}{(p-h)^2}\frac{\partial \kappa}{\partial s} \frac{\partial p}{\partial s}
   - \frac{\kappa}{(p-h)^2}\frac{\partial^2 p}{\partial s^2}
   + \frac{2\kappa}{(p-h)^3} \left(\frac{\partial p}{\partial s}\right)^2.
\end{eqnarray*}
Noticing that the support function evolves according to
\begin{eqnarray}\label{eq:3.21.201909}
\frac{\partial p}{\partial t} &=& -\frac{\partial}{\partial t}\langle X, N \rangle
=-\left\langle \left(\kappa - \frac{2\pi}{L}\right)N, N \right\rangle
  +\left\langle X, \frac{\partial \kappa}{\partial s} T \right\rangle
\nonumber \\
&=& \frac{2\pi}{L} - \kappa+ \langle X, T\rangle \frac{\partial \kappa}{\partial s},
\end{eqnarray}
the evolution equation of $\varphi_4$ can be given by
\begin{eqnarray}\label{eq:3.22.201909}
\frac{\partial \varphi_4}{\partial t}
&=&\frac{\partial^2 \varphi_4}{\partial s^2}
   +\frac{2}{p-h}\frac{\partial p}{\partial s}\frac{\partial \varphi_4}{\partial s}
   -\frac{\kappa}{p-h}\langle X, T\rangle \frac{\partial \varphi_4}{\partial s}
   +\frac{\kappa}{(p-h)^2} \frac{\partial^2 p}{\partial s^2}
\nonumber\\
&& +(p-h)^2(\varphi_4)^3 - \frac{2\pi}{L}(p-h) (\varphi_4)^2+(\varphi_4)^2
   - \frac{2\pi}{L}\frac{\varphi_4}{p-h}
\nonumber\\
&& - \frac{\kappa^2}{(p-h)^3} \langle X, T\rangle \frac{\partial p}{\partial s}.
\end{eqnarray}
Using
\begin{eqnarray*}
\frac{\partial p}{\partial s} = \kappa \langle X, T\rangle, ~~~~
\frac{\partial^2 p}{\partial s^2} =\frac{\partial \kappa}{\partial s} \langle X, T\rangle
+\kappa-\kappa^2 p,
\end{eqnarray*}
we can compute that
\begin{eqnarray} \label{eq:3.23.201909}
\frac{\kappa}{(p-h)^2} \frac{\partial^2 p}{\partial s^2}
- \frac{\kappa^2}{(p-h)^3} \langle X, T\rangle \frac{\partial p}{\partial s}
= \varphi_4 \langle X, T\rangle \frac{\partial \varphi_4}{\partial s} + (\varphi_4)^2
 -p(p-h)(\varphi_4)^3.
\end{eqnarray}
Substituting (\ref{eq:3.23.201909}) into the evolution equation of $\varphi_4$ can give us
\begin{eqnarray}\label{eq:3.24.201909}
\frac{\partial \varphi_4}{\partial t}
&=&\frac{\partial^2 \varphi_4}{\partial s^2}
   +\frac{2}{p-h}\frac{\partial p}{\partial s}\frac{\partial \varphi_4}{\partial s}
   -h(p-h)(\varphi_4)^3 -\frac{2\pi}{L}(p-h) (\varphi_4)^2\nonumber
\\
&&+2(\varphi_4)^2
   - \frac{2\pi}{L}\frac{\varphi_4}{p-h}.
\end{eqnarray}

As is well known that the arc length parameter $s$ depends on both space parameter and time $t$, 
one can not apply the maximum principle directly. In Section 2, the parameter $\varphi$ (independent of
$t$) is used to parametrize $X(\cdot, t)$. Now, let $g=\frac{\partial s}{\partial \varphi}$ and
suppose that for fixed $t$, $\varphi_4(\cdot, t)$ attains its maximum $(\varphi_4)_{\max} (t)$ at the 
point $(\varphi_*, t)$. Denoted by $s_* = s(\varphi_*)$, then
$$\frac{\partial \varphi_4}{\partial s} (s_*, t)
=\left(\frac{\partial \varphi_4}{\partial \varphi} \frac{1}{g}\right) (s(\varphi_*), t) =0$$
and
$$\frac{\partial^2 \varphi_4}{\partial s^2}(s_*, t)
= \left(\frac{\partial^2 \varphi_4}{\partial \varphi^2}\frac{1}{g^2}\right) (s(\varphi_*), t)
-\left(\frac{\partial \varphi_4}{\partial \varphi} \frac{\partial g}{\partial \varphi}
\frac{1}{g^3}\right) (s(\varphi_*), t)
\leq 0.$$
By (\ref{eq:3.24.201909}),
\begin{eqnarray*}
\frac{\partial \varphi_4}{\partial t}(s_*, t)
&\leq& -h(p-h)(\varphi_4(s_*, t))^3 - (p-h)\frac{2\pi}{L} (\varphi_4(s_*, t))^2
\\
&& +2(\varphi_4(s_*, t))^2 - \frac{2\pi}{L}\frac{\varphi_4(s_*, t)}{p-h}
\\
&<& -h(p-h)(\varphi_4(s_*, t))^3 +2(\varphi_4(s_*, t))^2
\\
&<& -h^2(\varphi_4(s_*, t))^3 +2(\varphi_4(s_*, t))^2.
\end{eqnarray*}
Once $\varphi_4(s_*, t)$ is greater than $\frac{2}{h^2}$, then $\frac{\partial \varphi_4}{\partial t}(s_*, t)< 0$.
By the maximum principle, we get
\begin{eqnarray}\label{eq:3.25.201909}
\varphi_4(s, t) \leq \max\left\{(\varphi_4)_{\max}(0), ~~\frac{2}{h^2(T_*)}\right\}
\end{eqnarray}
for $(s, t)\in [0, L] \times \left[0, T_*+\frac{A_0}{8\pi}\right]$. Since the ``support function" satisfies 
$p (s, t) \leq \frac{L(t)}{2} \leq \frac{L(0)}{2}$, we have
\begin{eqnarray}\label{eq:3.26.201909}
\kappa(s, t) \leq \left(\frac{L(0)}{2}-h(T_*)\right) \cdot\max\left\{(\varphi_4)_{\max}(0), ~~\frac{2}{h^2(T_*)}\right\}.
\end{eqnarray}
Similarly, if $\varphi_4$ attains $(\varphi_4)_{\min}(t)$ at a point $(s_*, t)$ then
$\frac{\partial \varphi_4}{\partial s} (s_*, t)= 0, ~~\frac{\partial^2 \varphi_4}{\partial s^2}(s_*, t) \geq 0$.
Once $\varphi_4(s_*, t) \leq -\frac{2\pi}{\sqrt{4\pi A_0} h(T_*)}$, we have
\begin{eqnarray*}
\frac{\partial \varphi_4}{\partial t}(s_*, t)
&\geq& -h(p-h)(\varphi_4(s_*, t))^3 - (p-h)\frac{2\pi}{L(t)} (\varphi_4(s_*, t))^2
\\
&& +2(\varphi_4(s_*, t))^2
   - \frac{2\pi}{L(t)}\frac{\varphi_4(s_*, t)}{p-h}
\\
&\geq& -\left(h \varphi_4(s_*, t) +\frac{2\pi}{L(t)}\right) (p-h)(\varphi_4(s_*, t))^2 >0,
\end{eqnarray*}
and
$$
\varphi_4(s, t) \geq \min\left\{(\varphi_4)_{\min}(0), ~-\frac{2\pi}{\sqrt{4\pi A_0} h(T_*)}\right\}
$$
for $t\in [0, L] \times \left[0, T_*+\frac{A_0}{8\pi}\right]$. Therefore we can get
\begin{eqnarray}\label{eq:3.26.201909}
\kappa(s, t) \geq
\left(\frac{L(0)}{2}-h(T_*)\right) \cdot\min \left\{(\varphi_4)_{\min}(0), ~-\frac{2\pi}{\sqrt{4\pi A_0} h(T_*)}\right\}.
\end{eqnarray}
%Therefore, one can conclude that:
\begin{lemma}\label{lem:3.7.201909}
Let the initial smooth curve $X_0$ be star-shaped and centrosymmetric with respect to $O$ and
Gage's area-preserving flow (\ref{eq:1.1.201909}) exist on a time interval $[0, T_*)$ for some $T_*>0$, then
the curvature of $X(\cdot, t)$ is bounded on the time interval $[0, T_*]$ uniformly.
\end{lemma}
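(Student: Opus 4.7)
The plan is essentially to package together the computations already performed in the preceding discussion. By Lemmas \ref{lem:3.4.201909}, \ref{lem:3.6.201909} and Corollary \ref{cor:3.5.201909}, the evolving curve $X(\cdot,t)$ is star-shaped and centrosymmetric with respect to $O$ throughout $[0,T_*)$. Combining the lower bound $r(\theta,t) \geq \delta(T_*) > 0$ extracted from the star-shapedness preservation argument (displayed in \eqref{eq:3.18.201909}) with the gradient bound on $\partial r/\partial \theta$ from Lemma \ref{lem:2.4.201909}, we obtain a uniform positive lower bound $p(\theta,t) \geq 2h(T_*)$ on the support function with respect to $O$, valid on the slightly larger time interval $[0, T_* + A_0/(8\pi))$, as recorded in \eqref{eq:3.19.201909}.

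With this positive lower bound in hand, the key device is the auxiliary function $\varphi_4 = \kappa/(p-h)$. I would first derive its evolution equation \eqref{eq:3.24.201909}, using the curvature equation \eqref{eq:3.20.201909} and the support function evolution \eqref{eq:3.21.201909}; the crucial feature is that the dangerous cubic term $\kappa^3$ coming from $\partial_t \kappa$ is absorbed into $-h(p-h)\varphi_4^3$, which carries a favorable sign because $p - h \geq h > 0$. Since $s$ depends on $t$, before invoking any maximum principle I would switch to the parameter $\varphi$ that is independent of $t$, noting that at an interior maximum point both $\partial_\varphi \varphi_4 = 0$ and the spatial second-derivative term is nonpositive.

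The maximum principle applied to \eqref{eq:3.24.201909} then shows that whenever $\varphi_4 > 2/h^2(T_*)$ one has $\partial_t \varphi_4 < 0$ at the maximum point, giving the upper bound \eqref{eq:3.25.201909}. A symmetric argument at the minimum point, using the inequality $L(t) \geq \sqrt{4\pi A_0}$ from Lemma \ref{lem:2.3}, forces $\partial_t \varphi_4 > 0$ whenever $\varphi_4 < -2\pi/(\sqrt{4\pi A_0}\,h(T_*))$, yielding the lower bound. Multiplying both bounds by $p - h \leq L_0/2 - h(T_*)$ gives the two-sided curvature estimate \eqref{eq:3.26.201909}, uniform on $[0,T_*]$, which is exactly the conclusion of the lemma.

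The only genuinely delicate step, and the one I expect to be the main obstacle, is verifying that the cubic cancellation in deriving \eqref{eq:3.24.201909} from \eqref{eq:3.22.201909} and \eqref{eq:3.23.201909} goes through exactly as needed, so that $h$ (rather than $p$) appears as the coefficient of $-\varphi_4^3$. Without this cancellation the sign of the leading cubic term is ambiguous and no simple maximum-principle bound is available; with it, the proof reduces to the routine ODE comparison sketched above.
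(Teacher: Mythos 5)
Your proposal follows essentially the same route as the paper: establish star-shapedness/centrosymmetry and the support-function lower bound $p\geq 2h(T_*)$, then apply the maximum principle to $\varphi_4=\kappa/(p-h)$ via the evolution equation (\ref{eq:3.24.201909}) in the time-independent parameter $\varphi$, obtaining two-sided bounds on $\varphi_4$ and hence on $\kappa$. The cubic cancellation you flag as the delicate point is exactly the computation the paper carries out in (\ref{eq:3.23.201909})--(\ref{eq:3.24.201909}), so the argument is correct and matches the paper's proof.
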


As a consequence of Lemma \ref{lem:3.7.201909}, one can extend Gage's area-preserving flow (\ref{eq:1.1.201909}) globally.
\begin{corollary}\label{cor:3.8.201909}
Gage's area-preserving flow (\ref{eq:1.1.201909}) %with initial $X_0$
exists on the time interval $[0, +\infty)$ if the initial smooth curve $X_0$ is star-shaped and centrosymmetric with
respect to $O$.
\end{corollary}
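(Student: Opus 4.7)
The plan is a standard continuation argument: assume toward contradiction that the maximal time of existence $T_{\max}$ of the flow is finite, collect uniform a priori estimates up to $T_{\max}$ sufficient to produce a smooth limiting curve $X(\cdot,T_{\max})$ that is again smooth, embedded, star-shaped and centrosymmetric with respect to $O$, and then invoke the short-time existence result (Lemma \ref{lem:2.2.201909}) at this limiting curve to extend the flow past $T_{\max}$, contradicting maximality.

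The first step is to assemble zeroth and first order estimates on $[0,T_{\max})$. By Lemma \ref{lem:2.3} the area is constant and the length is squeezed between $\sqrt{4\pi A_{0}}$ and $L_{0}$, so $L(t)$ stays in a compact interval inside $(0,\infty)$. By Lemma \ref{lem:3.6.201909}, the curve is star-shaped with respect to $O$ throughout $[0,T_{\max})$, and the analysis leading to (\ref{eq:3.18.201909}) produces a constant $\delta=\delta(T_{\max})>0$ with $r(\theta,t)\ge \delta$ on $[0,2\pi]\times[0,T_{\max})$. With this positive lower bound on $r$, Lemma \ref{lem:2.4.201909} furnishes the upper bound $r\le L_{0}/2$ and a uniform bound $|\partial r/\partial\theta|\le C_{1}$. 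In particular $g=\sqrt{r^{2}+r_\theta^{2}}$ is uniformly bounded above and below, so the quasilinear equation (\ref{eq:2.5.201909}) for $r$ is uniformly parabolic on $[0,2\pi]\times[0,T_{\max})$.

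The second step upgrades this to higher regularity. Lemma \ref{lem:3.7.201909} gives a uniform bound $|\kappa|\le K(T_{\max})$, and since the curvature is expressed via
\[
\kappa=\frac{1}{g^{3}}\Bigl(-r\,\frac{\partial^{2}r}{\partial\theta^{2}}+2\Bigl(\frac{\partial r}{\partial\theta}\Bigr)^{2}+r^{2}\Bigr),
\]
the bounds on $r$, $r_\theta$, and $\kappa$ translate into a uniform bound on $\partial^{2}r/\partial\theta^{2}$. Together with the uniform parabolicity this places the coefficients of (\ref{eq:2.5.201909}) in a bounded set, so standard interior parabolic Schauder estimates (e.g.\ from \cite{Lieberman-1996}) yield $C^{2+\alpha,1+\alpha/2}$ bounds on $r$ on $[0,2\pi]\times[0,T_{\max})$; then a routine bootstrap differentiating the equation gives uniform bounds on all spatial and temporal derivatives of $r$ up to the boundary $t=T_{\max}$.

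In the third step we take $t\nearrow T_{\max}$. The uniform $C^{\infty}$ bounds and the Arzel\`a--Ascoli theorem on $[0,2\pi]$ (with the periodicity of $r$ in $\theta$) produce a smooth limit $r(\theta,T_{\max})$ still satisfying $r\ge\delta>0$ with $C^{\infty}$ convergence, so $X(\cdot,T_{\max})=r(\cdot,T_{\max})P(\cdot)$ is a smooth embedded star-shaped curve. Centrosymmetry persists in the limit by Lemma \ref{lem:3.4.201909}. Applying Lemma \ref{lem:2.2.201909} with this curve as initial datum produces a smooth solution on $[T_{\max},T_{\max}+\eta)$ for some $\eta>0$, which glues with the original solution to give a smooth solution on $[0,T_{\max}+\eta)$; this contradicts the definition of $T_{\max}$ and forces $T_{\max}=+\infty$.

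The main obstacle is the higher-regularity bootstrap in the second step: every earlier lemma delivers only $C^{0}$ quantities (radius, curvature, length), whereas parabolic Schauder estimates require H\"older bounds on the coefficients of the linearized equation, and (\ref{eq:2.5.201909}) also contains the nonlocal factor $1/L(t)$. The uniform positive lower bound $L\ge\sqrt{4\pi A_{0}}$ is what keeps this nonlocal term harmless, and converting the $\kappa$ bound into a pointwise $r_{\theta\theta}$ bound is what makes the coefficients bounded; once these two points are in place, the bootstrap proceeds by the routine differentiation scheme used, for instance, in Chapter 1 of \cite{Chou-Zhu}.
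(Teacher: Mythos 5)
Your proposal is correct and follows essentially the same route as the paper: star-shapedness plus the lower bound (\ref{eq:3.18.201909}) and Lemma \ref{lem:2.4.201909} control $r$ and $r_\theta$, the curvature bound of Lemma \ref{lem:3.7.201909} combined with the formula for $\kappa$ controls $r_{\theta\theta}$, and higher derivatives follow from (linear) parabolic estimates, giving smoothness on every finite time interval. The only difference is presentational: you phrase the conclusion as a maximal-time continuation argument with a restart via Lemma \ref{lem:2.2.201909}, which is a more explicit version of the paper's terse ``smooth on any finite time interval, hence exists globally.''
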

\begin{proof}
Let $T_*$ be a positive number which can be chosen arbitrarily. By Lemma \ref{lem:2.4.201909}, both $r$ and
$\frac{\partial r}{\partial \theta}$ are bounded uniformly on the time interval $[0, T_*]$.
By Lemmas (\ref{lem:3.1.201909})-(\ref{lem:3.7.201909}), $\left|\frac{\partial^2 r}{\partial \theta^2}\right|$
is also bounded uniformly on the same interval $[0, T_*]$.
All the higher derivatives $\frac{\partial^i r}{\partial \theta^i} (i\geq 3)$ satisfy linear parabolic equations, %so these derivatives
they also have uniform bounds. Therefore, the evolving curve is smooth on any finite time interval, and the flow
(\ref{eq:1.1.201909}) with initial $X_0$ exists globally.
\end{proof}

\section{Convergence and a convexity theorem}

In this section, it is shown that Gage's area preserving flow can deform every smooth,
closed and embedded curve into a convex one if the flow exists on the time interval $[0, +\infty)$.

Since we have proved that the flow (\ref{eq:1.1.201909}) with inial curve $X_0$ being smooth, centrosymmetric and star-shaped
does not blow up at any finite time in Section 3,
the main result in this paper can be obtained by the following theorem immediately.
\begin{theorem}\label{thm:4.1.201909}
Given a smooth, embedded and closed initial curve $X_0$, if the flow (\ref{eq:1.1.201909})
does not blow up in the time interval $[0, +\infty)$, then the evolving curve
converges to a finite circle as time goes to infinity.
\end{theorem}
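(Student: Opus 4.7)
The plan is to exploit the monotone decay of the length $L(t)$ along (\ref{eq:1.1.201909}) together with the preserved area $A(t)\equiv A_0$ (Lemma~\ref{lem:2.3}) to first extract, by compactness, a smooth subsequential limit of $X(\cdot,t)$ as $t\to\infty$, and then to force this limit to be a round circle via the equality case of the Cauchy--Schwarz inequality. Full convergence is then a standard upgrade from subconvergence.

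\textbf{Monotonicity and integrable decay.} Granting that $X(\cdot,t)$ remains embedded on $[0,\infty)$ (which follows from the non-blow-up hypothesis together with a standard distance-comparison argument), the Hopf Umlaufsatz gives $\int \kappa\,ds\equiv 2\pi$. A direct computation along (\ref{eq:1.1.201909}) combined with Cauchy--Schwarz yields
\[
\frac{dL}{dt}=-\int \kappa\left(\kappa-\tfrac{2\pi}{L}\right) ds=\frac{4\pi^{2}}{L}-\int \kappa^{2}\,ds\le 0,
\]
with equality only on circles. Since $L(t)\ge\sqrt{4\pi A_0}$ by Lemma~\ref{lem:2.3}, $L(t)$ decreases monotonically to some $L_\infty\ge\sqrt{4\pi A_0}$, and integrating in $t$ gives
\[
\int_{0}^{\infty}\left(\int \kappa^{2}\,ds-\tfrac{4\pi^{2}}{L}\right)dt=L_0-L_\infty<+\infty.
\]
Hence along some sequence $t_n\to\infty$ one has $\int\kappa^{2}\,ds\bigr|_{t_n}-4\pi^{2}/L(t_n)\to 0$.

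\textbf{Compactness and identification of the limit.} Using Grayson-type arguments (cf.\ Lemmas 4.2--4.3 promised in the introduction), one would promote the local-in-time smooth bounds supplied by the non-blow-up hypothesis to time-uniform bounds on $|\kappa|$ and all its derivatives. After translating each $X(\cdot,t_n)$ so that its centroid lies at the origin, Arzel\`a--Ascoli yields a smooth embedded limit $X_\infty$ of length $L_\infty$ enclosing area $A_0$. The two integral identities pass to the limit, giving $\int_{X_\infty}\kappa\,ds=2\pi$ and $\int_{X_\infty}\kappa^{2}\,ds=4\pi^{2}/L_\infty$; equality in Cauchy--Schwarz then forces $\kappa\equiv 2\pi/L_\infty$ on $X_\infty$, so $X_\infty$ is a round circle, and the area constraint $A_0=L_\infty^{2}/4\pi$ fixes $L_\infty=\sqrt{4\pi A_0}$ and radius $\sqrt{A_0/\pi}$. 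Since every subsequential limit is a circle of the same radius, a standard argument upgrades subconvergence to full $C^\infty$ convergence of $X(\cdot,t)$ to a fixed circle as $t\to\infty$.

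\textbf{Main obstacle.} The technically hardest step is promoting the local-in-time regularity from the non-blow-up hypothesis to time-uniform curvature estimates. The evolution equation $\kappa_t=\kappa_{ss}+\kappa^{3}-(2\pi/L)\kappa^{2}$ is supercritical in the cubic term, so the mere absence of finite-time singularities does not by itself prevent $\max|\kappa|$ from drifting to $+\infty$ as $t\to\infty$. Ruling this out must combine the integrated decay of $\int\kappa^{2}\,ds-4\pi^{2}/L$ obtained above with a Grayson-type comparison/interpolation argument; once a time-independent bound on $\kappa$ is secured, the rest of the proof reduces to the monotonicity-plus-compactness scheme just sketched.
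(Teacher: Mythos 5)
Your outline stalls exactly at the step that constitutes the actual content of the theorem. Everything before it (monotonicity of $L$, the identity $\frac{dL}{dt}=\frac{4\pi^2}{L}-\int\kappa^2\,ds$, integrability in time, hence smallness of $\int\kappa^2\,ds-\frac{4\pi^2}{L}$ along a sequence $t_n\to\infty$) is correct and coincides with the paper's starting point. But then you assume that the non-blow-up hypothesis can be ``promoted'' to time-uniform bounds on $|\kappa|$ and all derivatives, extract a subsequential limit by Arzel\`a--Ascoli, and identify it as a circle by the equality case of Cauchy--Schwarz. You yourself flag the uniform curvature bound as the main obstacle and offer only the hope that a ``Grayson-type comparison/interpolation argument'' will supply it; no such argument is given, and it is not clear one exists at this level of generality (distance-comparison techniques do not transfer directly to the nonlocal flow). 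Moreover, even granting compactness, smallness of $\int\kappa^2\,ds-\frac{4\pi^2}{L}$ only along a sequence $t_n$ does not by itself pass to the limit curve without uniform higher-order control, and circular subsequential limits do not immediately give convergence of the flow itself (the limit circle could a priori drift or oscillate).

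The paper closes this gap by a different mechanism that never requires an a priori uniform curvature bound. Setting $u=\kappa-\frac{2\pi}{L}$, it differentiates $\int u^2\,ds$ in time and observes that in the non-convex regime one has $\inf_s\kappa^2=0$, whence $\sup_s\kappa^2\le L\int\kappa_s^2\,ds$; this lets the dissipation term $-2\int\kappa_s^2\,ds$ absorb the cubic terms, so that the whole right-hand side is $\sup_s\kappa^2$ times a bracket which is nonpositive once $\int u^2\,ds\le\frac{3\varepsilon}{2}$ with $\varepsilon$ chosen small. Combined with the $\varepsilon$-dense times $t_n$ at which $\int u^2\,ds\le\varepsilon$, this trap argument shows $\int u^2\,ds\to0$ for \emph{all} large times, not merely along a sequence (Lemma \ref{lem:4.2.201909}). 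A second differential inequality with Wirtinger's inequality then gives $\int\kappa_s^2\,ds\to0$ (Lemma \ref{lem:4.3.201909}), and Sobolev embedding yields $\kappa\to\frac{2\pi}{L}$ uniformly, so the curve becomes convex in finite time; from there Gage's convex theory and the result of \cite{Gao-Zhang-2019} give $C^\infty$ convergence to a fixed circle. If you want to salvage your compactness scheme, you would have to prove the uniform estimates you postulated; as written, the proposal replaces the theorem's core difficulty with an unproven assumption.
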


In order to prove this result, we need do some preparations. %Now we use the arc length parameter for the evolving curve. Under the flow
%\begin{equation}\label{eq:4.1.201909}
%\left\{\begin{array}{l}
%\frac{\partial X}{\partial t}(s, t)=(\kappa-\frac{2\pi}{L})N \ \ \ \text{in} \ \ [0, L(t)] \times (0, \omega),\\
%X(s, 0)= X_0(s) \ \  \ \ \ \  \text{on} \ \ [0, L(t)],
%\end{array} \right.
%\end{equation}
%the metric $g$ evolves according to the following
%\begin{eqnarray*}
%\frac{\partial g}{\partial t} = -\kappa(\kappa-\frac{2\pi}{L})g.
%\end{eqnarray*}
%Using this equation, one can get the evolution of the length:
%\begin{eqnarray*}
%\frac{d L}{dt}=-\int_0^L \kappa^2 ds+\frac{4\pi^2}{L}.
%\end{eqnarray*}
%and the relation of $\frac{\partial}{\partial t}$ and $\frac{\partial}{\partial s}$:
%\begin{eqnarray*}
%\frac{\partial}{\partial t}\frac{\partial}{\partial s}=\frac{\partial}{\partial s}\frac{\partial}{\partial t}+\kappa(\kappa-\frac{2\pi}{L}) \frac{\partial}{\partial s}
%\end{eqnarray*}
%Furthermore, we can obtain the evolution equation of the curvature:
%\begin{eqnarray}\label{eq:4.2.201909}
%\frac{\partial \kappa}{\partial t}=\frac{\partial^2 \kappa}{\partial s^2}+\kappa^3-\frac{2\pi}{L}\kappa^2.
%\end{eqnarray} Before the proof of this theorem, two lemmas will be established.
From now on, we use the subindex to stand for partial derivatives,
such as $\kappa_t=\frac{\partial \kappa}{\partial t}, \kappa_s=\frac{\partial \kappa}{\partial s},
\kappa_{ss}=\frac{\partial^2 \kappa}{\partial s^2}, \cdots$.
%By the evolution equation of $\kappa$, the elastic energy of the evolving curve satisfies
%that
%\begin{eqnarray*}
%\frac{d}{dt}\int_0^L \kappa^2 ds=-2\int_0^L (\kappa_s)^2 ds +\int_0^L \kappa^4 ds-\frac{2\pi}{L}\int_0^L \kappa^3 ds.
%\end{eqnarray*}
The length of the evolving curve is decreasing during the evolution process and bounded from below by 
$\sqrt{4\pi A_0}$. Following the parlance from \cite{Grayson-1987}, the time derivative
of the length must approach zero at an $\varepsilon$-dense set of sufficiently large time.
Now one can use this fact in the following special case.
Denote by $I_n$ the time interval $[n-\frac{1}{10}, n+\frac{1}{10}]$, where $n$ is a natural number.
For a positive number $\varepsilon$, all of the intervals which satisfy that
$\inf_{I_n}\int_0^L \kappa^2 ds-\frac{4\pi^2}{L}\geq \varepsilon$
consists of a finite set. So there exists an $N>0$ such that whenever $n>N$ we have $t_n\in I_n$ satisfying
\begin{eqnarray}\label{eq:4.1.201909}
\int_0^L \kappa(\cdot, t_n)^2 ds-\frac{4\pi^2}{L(t_n)}\leq \varepsilon.
\end{eqnarray}

\begin{lemma}\label{lem:4.2.201909}
The $L^2$ norm of the difference $(\kappa-\frac{2\pi}{L})$ converges to zero as $t\rightarrow \infty$.
\end{lemma}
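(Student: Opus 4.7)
The plan is to work with $f(t) := \int_0^L \kappa^2\,ds - 4\pi^2/L$, which is exactly $\|\kappa(\cdot,t) - 2\pi/L(t)\|_{L^2}^2$ once we recall that $\int_0^L \kappa\,ds = 2\pi$ for any simple closed curve traversed once (a property preserved throughout the flow, since embeddedness is preserved). The first step is to identify $f$ with the dissipation of length. Using the standard first variation formula for length under a purely normal flow with velocity $V = \kappa - 2\pi/L$, and the identity above,
\begin{equation*}
\frac{dL}{dt} \;=\; -\int_0^L \kappa\!\left(\kappa - \tfrac{2\pi}{L}\right) ds \;=\; -\int_0^L \kappa^2\,ds + \frac{4\pi^2}{L} \;=\; -f(t) \;\leq\; 0.
\end{equation*}

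Second, Lemma \ref{lem:2.3} bounds $L(t) \geq \sqrt{4\pi A_0}$, and the identity above shows $L$ is non-increasing, so $L_\infty := \lim_{t\to\infty} L(t)$ exists and $\int_0^\infty f(t)\,dt = L_0 - L_\infty < \infty$. In particular the set $\{n : \inf_{I_n} f \geq \varepsilon\}$ must be finite for every $\varepsilon>0$ (otherwise $\int_0^\infty f \geq \sum_{n>N}\varepsilon\cdot|I_n| = \infty$), which is exactly the content of (\ref{eq:4.1.201909}). So one already has a sequence $t_n \in I_n$ (with $n \to \infty$) along which $f(t_n) \to 0$.

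Third, to upgrade this subsequence convergence to $\lim_{t\to\infty} f(t) = 0$, I would argue by contradiction. If the conclusion fails, there exist $\delta_0>0$ and $s_k\to\infty$ with $f(s_k)\geq \delta_0$. Pick $n_k$ with $s_k\in I_{n_k}$; for $k$ large there is $t_{n_k}\in I_{n_k}$ with $f(t_{n_k})\leq \delta_0/2$ by (\ref{eq:4.1.201909}). Thus $f$ must jump by at least $\delta_0/2$ across a time interval of length at most $1/5$, which must be ruled out by an a priori bound on $|f'|$ (or on the total variation of $f$ on $I_{n_k}$). Differentiating $f$ using (\ref{eq:3.20.201909}) and $(ds)_t = -\kappa(\kappa - 2\pi/L)\,ds$ yields
\begin{equation*}
f'(t) \;=\; -2\int_0^L \kappa_s^2\,ds \;+\; \int_0^L \kappa^4\,ds \;-\; \frac{2\pi}{L}\!\int_0^L \kappa^3\,ds \;+\; \frac{4\pi^2}{L^2}\,f(t),
\end{equation*}
which combined with integration over $I_{n_k}$ should give $|f(s_k)-f(t_{n_k})| \to 0$, contradicting $f(s_k)-f(t_{n_k})\geq \delta_0/2$.

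The main obstacle is to produce an honest time-uniform bound on the right side of the displayed identity for $f'$, that is, a uniform bound on $\int \kappa^4\,ds$ and $\int \kappa_s^2\,ds$ on the dyadic intervals $I_{n_k}$ for $n_k$ large. The estimates of Section \ref{sec:3.201909} give such bounds on any fixed finite time interval $[0,T_*]$ but with constants depending on $T_*$. The hard part is to promote them to constants independent of $n_k$, which I would attempt by starting from the fact that on each $I_{n_k}$ one already knows $f(t_{n_k})\leq \delta_0/2$, using the evolution equation for $\int \kappa^2\,ds$ together with the integrability $\int_0^\infty f\,dt<\infty$ to bootstrap uniform higher-derivative estimates on a tail of time.
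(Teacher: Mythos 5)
Your first two steps (the identity $\frac{dL}{dt}=-\int_0^L(\kappa-\frac{2\pi}{L})^2ds$, the resulting space--time integrability, and the existence of times $t_n\in I_n$ with $f(t_n)\le\varepsilon$) coincide with the paper's setup. The genuine gap is in the third step, and it is exactly the step where the real work lies: to rule out a jump of $f$ between $t_{n}$ and a nearby bad time you need a time-uniform bound on $|f'|$, i.e.\ uniform-in-$t$ bounds on $\int_0^L\kappa^4\,ds$ and $\int_0^L\kappa_s^2\,ds$ on late time intervals. No such bounds are available: the curvature estimates of Section \ref{sec:3.201909} come from the support-function lower bound $h(T_*)$ and degrade as $T_*\to\infty$, and the ``bootstrap from $\int_0^\infty f\,dt<\infty$'' you gesture at is not an argument --- controlling $\int\kappa^4$ by the smallness of $\int(\kappa-\frac{2\pi}{L})^2$ is precisely the kind of reverse estimate one cannot get without further structure. (Two smaller defects: the intervals $I_n=[n-\frac1{10},n+\frac1{10}]$ do not cover the time axis, so ``pick $n_k$ with $s_k\in I_{n_k}$'' can fail and you would in fact need to control $f$ across gaps of length about one; and the last term in your formula for $f'$ should be $-\frac{4\pi^2}{L^2}f$, not $+\frac{4\pi^2}{L^2}f$, since $L_t=-f$.)

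The paper avoids any uniform curvature bound by a different mechanism. It differentiates $\int_0^L(\kappa-\frac{2\pi}{L})^2ds$ as in (\ref{eq:4.3.201909}) and, in the non-convex case (the convex case being finished by Gage's theorem), uses that $\kappa$ vanishes somewhere, whence $\sup_s\kappa^2\le L\int_0^L\kappa_s^2\,ds$. This lets the dissipative term $-2\int_0^L\kappa_s^2\,ds$ absorb a multiple of $\sup_s\kappa^2$, and after Cauchy--Schwarz every positive term also carries the factor $\sup_s\kappa^2$; the bracket multiplying $\sup_s\kappa^2$ is nonpositive as soon as $\int_0^L(\kappa-\frac{2\pi}{L})^2ds\le\frac32\varepsilon$ with $\varepsilon$ chosen so that $-\frac{2}{L_0}+\frac32\varepsilon+\frac{2\pi}{\sqrt{L_\infty}}\sqrt{\frac{3\varepsilon}{2}}\le0$. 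Hence once the quantity dips below $\varepsilon$ at some $t_n$ (which (\ref{eq:4.1.201909}) supplies), it is trapped below $\frac32\varepsilon$ for all later times --- a barrier argument that never requires bounding $|f'|$, $\int\kappa^4$, or $\int\kappa_s^2$ uniformly. Without this (or some substitute) your proposal does not close.
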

\begin{proof}
Integrating the evolution equation of $L$ can give us 
$$L(t)-L(0)=-\int_0^t\int_0^L \left(\kappa-\frac{2\pi}{L}\right)^2 dsdt.$$
Letting $t$ tend to infinity can yield 
\begin{eqnarray}\label{eq:4.2.201909}
\int_0^\infty\int_0^L \left(\kappa-\frac{2\pi}{L}\right)^2 dsdt < L(0).
\end{eqnarray}
We consider the time derivative of the integral
$\int_0^L \left(\kappa-\frac{2\pi}{L}\right)^2 ds$:
\begin{eqnarray}
\frac{d}{dt}\int_0^L \left(\kappa-\frac{2\pi}{L}\right)^2 ds
&=& -2\int_0^L (\kappa_s)^2 ds + \int_0^L \kappa^2\left(\kappa-\frac{2\pi}{L}\right)^2 ds
\nonumber\\
&&
+\frac{2\pi}{L}\int_0^L \kappa\left(\kappa-\frac{2\pi}{L}\right)^2 ds. \label{eq:4.3.201909}
\end{eqnarray}
One only needs to deal with the case that the evolving curve is not convex.
In this case, $\inf_{s} \kappa^2=0$.  Otherwise, the convex curve $X(\cdot, t)$ will converge to
a finite circle (see \cite{Gage-1983}) and the proof is completed. Since
%$$\sup_{s} \kappa^2 \leq \left(\inf_{s} \kappa^2+\int_0^L |\kappa_s| ds\right)^2\leq L \int_0^L (\kappa_s)^2 ds,$$
$$\sup_{s} \kappa^2 \leq L \int_0^L (\kappa_s)^2 ds,$$
one obtains
$$-\int_0^L (\kappa_s)^2 ds\leq -\frac{1}{L}\sup_{s} \kappa^2.$$
By the Cauchy-Schwartz inequality,
\begin{eqnarray*}
\left|\int_0^L \kappa\left(\kappa-\frac{2\pi}{L}\right)^2 ds\right|
&\leq& \sqrt{\int_0^L \kappa^2\left(\kappa-\frac{2\pi}{L}\right)^2 ds} \cdot \sqrt{\int_0^L \left(\kappa-\frac{2\pi}{L}\right)^2 ds}
\\
&\leq& \sqrt{\int_0^L \sup_{s} \kappa^4 ds} \cdot \sqrt{\int_0^L \left(\kappa-\frac{2\pi}{L}\right)^2 ds}.
\\
&=& \sup_{s} \kappa^2 \cdot \sqrt{L(t)} \cdot \sqrt{\int_0^L \left(\kappa-\frac{2\pi}{L}\right)^2 ds}.
\end{eqnarray*}
Taking this estimate into the equation (\ref{eq:4.3.201909}) can yield
\begin{eqnarray*}
\frac{d}{dt}\int_0^L\left(\kappa-\frac{2\pi}{L}\right)^2 ds
&\leq& -\frac{2}{L(t)}\sup_{s} \kappa^2
+ \sup_{s} \kappa^2 \int_0^L \left(\kappa-\frac{2\pi}{L}\right)^2 ds
\\
&&
+ \sup_{s} \kappa^2 \cdot \frac{2\pi}{\sqrt{L(t)}} \cdot \sqrt{\int_0^L \left(\kappa-\frac{2\pi}{L}\right)^2 ds}
\\
&\leq& \sup_{s} \kappa^2 \left(-\frac{2}{L_0}+  \int_0^L\left(\kappa-\frac{2\pi}{L}\right)^2 ds
+\frac{2\pi}{\sqrt{L_\infty}} \sqrt{\int_0^L \left(\kappa-\frac{2\pi}{L}\right)^2 ds}\right),
\end{eqnarray*}
where $L_\infty = \sqrt{4\pi A}$. Choose $\varepsilon>0$ in (\ref{eq:4.1.201909}) small enough
such that
$$-\frac{2}{L_0}+ \frac{3}{2}\varepsilon + \frac{2\pi}{\sqrt{L_\infty}}\sqrt{\frac{3\varepsilon}{2}} \leq 0.$$
 We now claim that $\int_0^L(\kappa-\frac{2\pi}{L})^2 ds < \frac{3\varepsilon}{2}$.

Firstly, assume that at time $t=t_n$, we have
$$\int_0^L(\kappa-\frac{2\pi}{L})^2 ds \leq \varepsilon.$$
Secondly, %consider the time
for $t>t_n$, suppose there exists a $t_*\in \left(t_n, t_n+\frac{11}{10}\right]$ such that
\begin{eqnarray*}
\int_0^L \left(\kappa(\cdot, t_*)-\frac{2\pi}{L(t_*)}\right)^2 ds= \frac{3\varepsilon}{2}
\end{eqnarray*}
and
\begin{eqnarray*}
\int_0^L \left(\kappa-\frac{2\pi}{L}\right)^2 ds < \frac{3\varepsilon}{2}
\end{eqnarray*}
for all $t\in [t_n, t_*)$. In the time interval $(t_n, t_*)$, we have
\begin{eqnarray*}
\frac{d}{dt}\int_0^L\left(\kappa-\frac{2\pi}{L}\right)^2 ds &\leq& \sup_{s} \kappa^2
\left(-\frac{2}{L_0}+ \frac{3}{2}\varepsilon + \frac{2\pi}{\sqrt{L_\infty}}\sqrt{\frac{3\varepsilon}{2}} \right)
\leq 0.
\end{eqnarray*}
Thus
\begin{eqnarray*}
\int_0^L \left(\kappa(\cdot, t_*)-\frac{2\pi}{L(t_*)}\right)^2 ds &\leq& \int_0^L
\left(\kappa(\cdot, t_n)-\frac{2\pi}{L(t_n)}\right)^2 ds < \frac{3\varepsilon}{2},
\end{eqnarray*}
which contradicts with the definition of $t_*$. For any $\varepsilon>0$, there exists an $N>0$
such that $t>N$ implies that $\int_0^L(\kappa-\frac{2\pi}{L})^2 ds < \frac{3}{2}\varepsilon$.
That is to say, one has the limit
$$\lim_{t\rightarrow \infty} \int_0^L \left(\kappa-\frac{2\pi}{L}\right)^2 ds =0. $$
\end{proof}

The integral of $\int_0^L (\kappa-\frac{2\pi}{L})^2 ds$ in the equation (\ref{eq:4.3.201909}) is bounded in the time interval $[0, \infty)$.
Next, the $L^2$-norm of $(\kappa-\frac{2\pi}{L})_s$ will be estimated.

\begin{lemma}\label{lem:4.3.201909}
Under the assumption of Theorem \ref{thm:4.1.201909}, we have the limit
$$\lim_{t\rightarrow \infty} \int_0^L (\kappa_s)^2 ds =0. $$
\end{lemma}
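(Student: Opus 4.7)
The plan is to mirror the two-step strategy of Lemma \ref{lem:4.2.201909}: first prove the integrability $\int_0^\infty \int_0^L (\kappa_s)^2\,ds\,dt < \infty$, which produces a sequence $t_n \to \infty$ with $B(t_n) := \int_0^L (\kappa_s(\cdot,t_n))^2\,ds \to 0$; then derive a differential inequality strong enough to force $B$ to remain small once it becomes small.

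\textbf{Step 1 (integrability).} If $X(\cdot,t)$ ever becomes convex, Gage's convergence result \cite{Gage-1986} for convex initial data finishes the proof, so I assume the curve is never convex. Then $\kappa$ has a zero at each time, whence $\sup_s \kappa^2 \leq L\,B(t)$. Rearranging (\ref{eq:4.3.201909}) gives
\begin{equation*}
2B = -\frac{d}{dt}\!\int_0^L\!\left(\kappa-\tfrac{2\pi}{L}\right)^{\!2}\!ds + \int_0^L \kappa^2\!\left(\kappa-\tfrac{2\pi}{L}\right)^{\!2}\!ds + \tfrac{2\pi}{L}\!\int_0^L \kappa\!\left(\kappa-\tfrac{2\pi}{L}\right)^{\!2}\!ds.
\end{equation*}
Using $\sup\kappa^2 \leq LB$, the Cauchy--Schwarz estimate employed in the proof of Lemma \ref{lem:4.2.201909}, and the decay $A(t) := \int(\kappa - 2\pi/L)^2\,ds \to 0$, I bound the last two terms by an AM--GM--absorbable fraction of $B$ plus an integrable residual. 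Integrating from $0$ to $\infty$ yields $\int_0^\infty B\,dt < \infty$, whence a sequence $t_n\to\infty$ with $B(t_n)\to 0$.

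\textbf{Step 2 (differential inequality for $B$).} Using $\kappa_t = \kappa_{ss} + \kappa^3 - \tfrac{2\pi}{L}\kappa^2$, the commutator $[\partial_t,\partial_s] = \kappa(\kappa-\tfrac{2\pi}{L})\partial_s$, and integration by parts,
\begin{equation*}
B'(t) = -2\!\int_0^L(\kappa_{ss})^2\,ds + \text{terms of the form } \int\kappa^2(\kappa_s)^2 \text{ and } \tfrac{1}{L}\int\kappa(\kappa_s)^2.
\end{equation*}
Because $\int_0^L\kappa_s\,ds = 0$, $\kappa_s$ has a zero, so $\sup(\kappa_s)^2 \leq L\int(\kappa_{ss})^2\,ds$ and hence $\int(\kappa_{ss})^2\,ds \geq B/L_0^2$. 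The Gagliardo--Nirenberg inequality $\|\kappa\|_\infty^2 \leq C\|\kappa_s\|_2\|\kappa\|_2 \leq C\sqrt{B}\sqrt{A+4\pi^2/L}$ together with Lemma \ref{lem:4.2.201909} bounds the error terms by $CB^{3/2}$ for large $t$, giving
\begin{equation*}
B'(t) \leq -\tfrac{2}{L_0^2}\,B(t) + C\,B(t)^{3/2}.
\end{equation*}

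\textbf{Step 3 (propagation).} Set $\delta = (1/(CL_0^2))^2$; whenever $B(t) \leq \delta$ the right side is $\leq -\tfrac{1}{L_0^2}B(t) \leq 0$, so $B$ is non-increasing. Choosing $n$ with $B(t_n) < \delta$ (possible by Step 1), $B(t) \leq B(t_n)$ for all $t\geq t_n$, and letting $n\to\infty$ gives $B(t)\to 0$. The chief technical obstacle is \textbf{Step 2}: the evolution equation of $B$ contains several quartic-in-$\kappa$ integrals and a non-local $1/L$ factor; the Poincaré-type lower bound on $\int(\kappa_{ss})^2$ comes from closedness of the curve, and the Gagliardo--Nirenberg constants must be controlled uniformly in $t$ using $L\geq\sqrt{4\pi A_0}$ (Lemma \ref{lem:2.3}) and the decay of $A$ (Lemma \ref{lem:4.2.201909}).
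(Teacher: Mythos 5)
Your proposal is correct in outline, but it follows a genuinely different route from the paper's proof. The paper works throughout with $u=\kappa-\frac{2\pi}{L}$ and runs a pointwise-in-time dichotomy on $B(t)=\int_0^L u_s^2\,ds=\int_0^L\kappa_s^2\,ds$: either $B\le C\int_0^L u^2\,ds$, in which case Lemma \ref{lem:4.2.201909} already forces smallness, or $B> C\int_0^L u^2\,ds$, in which case the interpolation $\int_0^L u_s^2\,ds\le\frac1C\int_0^L u_{ss}^2\,ds$ lets the good term $-2\int_0^L u_{ss}^2\,ds$ in (\ref{eq:4.4.201909}) absorb \emph{all} error terms --- in particular the linear one $\frac{8\pi^2}{L^2}B$, whose coefficient is too large for a naive Poincar\'e bound --- and Wirtinger's inequality then gives exponential decay. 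You instead (i) integrate (\ref{eq:4.3.201909}) in time (using $\int_0^\infty\int_0^L u^2\,ds\,dt<L_0$ and the decay of Lemma \ref{lem:4.2.201909}) to get $\int_0^\infty B\,dt<\infty$ and hence times $t_n$ with $B(t_n)\to0$, and (ii) trap $B$ by a differential inequality whose error terms are \emph{superlinear} in $B$. This avoids the dichotomy, but the superlinearity is bought only by your reduction to the never-convex case: there $\kappa$ vanishes somewhere (since $\int\kappa\,ds=2\pi>0$), so $\|\kappa\|_\infty^2\le2\|\kappa\|_{L^2}\|\kappa_s\|_{L^2}$ holds; as literally stated your Gagliardo--Nirenberg inequality is false (take $\kappa$ constant), and with the necessary extra term $\frac1L\|\kappa\|_{L^2}^2$ you would pick up a linear term of size roughly $\frac{28\pi^2}{L^2}B$, which $-\frac{2}{L_0^2}B$ cannot absorb and your Step 3 would fail --- so you must state explicitly that the non-convexity reduction of Step 1 is a standing assumption for Steps 2--3 (the paper's dichotomy is precisely its substitute for this point). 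Two minor quantitative corrections: the term $\frac1L\int_0^L\kappa\,\kappa_s^2\,ds$ is only $O(B^{5/4})$, not $O(B^{3/2})$, so in Step 3 the threshold $\delta$ should be a fourth power rather than a square (harmless, since both exponents exceed $1$); and the uniform constants indeed come from $\sqrt{4\pi A_0}\le L\le L_0$ (Lemma \ref{lem:2.3}) and $\|\kappa\|_{L^2}^2=\int_0^L u^2\,ds+\frac{4\pi^2}{L}$ being bounded for large $t$. In exchange for this extra care, your argument yields the additional information that $\int_0^L\kappa_s^2\,ds$ is integrable in time and exhibits an explicit trapping mechanism, whereas the paper's dichotomy is shorter and needs only Lemma \ref{lem:4.2.201909} plus Wirtinger.
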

\begin{proof}
Denote $u=\kappa-\frac{2\pi}{L}$. It is easy to calculate %from the evolution of $\kappa$
that
$$u_t=u_{ss}+\left(u+\frac{2\pi}{L}\right)^2u-\frac{2\pi}{L^2}\int_0^L u^2 ds$$ and
$$u_{st}=u_{sss}+3\left(u+\frac{2\pi}{L}\right)uu_s+\left(u+\frac{2\pi}{L}\right)^2u_s.$$
By using these equations one can get
\begin{eqnarray}\label{eq:4.4.201909}
\frac{d}{dt}\int_0^L(u_s)^2 ds &=& -2\int_0^L (u_{ss})^2 ds +7\int_0^L u^2(u_s)^2 ds
\nonumber\\
&&  + \frac{18\pi}{L} \int_0^L u (u_s)^2 ds +\frac{8\pi^2}{L^2}\int_0^L(u_s)^2 ds.
\end{eqnarray}
If $\int_0^L (u_{s})^2 ds>C\int_0^L u^2 ds$ for some positive $C$ then from
$$\int_0^L (u_{s})^2 ds=-\int_0^L uu_{ss} ds\leq \sqrt{\frac{1}{C}\int_0^L (u_s)^2 ds\int_0^L (u_{ss})^2 ds}$$
it follows that
\begin{eqnarray}\label{eq:4.5.201909}
\int_0^L (u_s)^2 ds\leq \frac{1}{C} \int_0^L (u_{ss})^2 ds.
\end{eqnarray}
We now estimate the terms in the right hand side of the equation (\ref{eq:4.4.201909}).
\begin{eqnarray}
\int_0^L u^2(u_s)^2 ds &\leq& \int_0^L u^2 ds \cdot \sup_s(u_s)^2\leq \int_0^L u^2 ds \left(\int_0^L |u_{ss}| ds\right)^2
\nonumber\\
&\leq& \int_0^L u^2 ds L \int_0^L (u_{ss})^2 ds. \label{eq:4.6.201909}
\end{eqnarray}
Using (\ref{eq:4.5.201909})-(\ref{eq:4.6.201909}) can yield
\begin{eqnarray}
\int_0^L u(u_s)^2 ds &\leq& \frac{1}{2}\int_0^L u^2(u_s)^2 ds+\frac{1}{2} \int_0^L (u_s)^2 ds
\nonumber\\
&\leq& \frac{L}{2}\int_0^L u^2 ds \int_0^L (u_{ss})^2 ds
+ \frac{1}{2C} \int_0^L (u_{ss})^2 ds. \label{eq:4.7.201909}
\end{eqnarray}
Taking the estimates (\ref{eq:4.5.201909})-(\ref{eq:4.7.201909}) into the equation (\ref{eq:4.4.201909}), one obtains
\begin{eqnarray*}
\frac{d}{dt}\int_0^L(u_s)^2 ds
&=& \left(-2+(7L+9\pi)\int_0^L u^2 ds+ \frac{9\pi}{C L}+ \frac{8\pi^2}{CL^2}\right)\int_0^L (u_{ss})^2 ds
\\
&\leq& \left(-2+(7L_0+9\pi)\int_0^L u^2 ds+ \frac{9\pi}{C L_\infty}+ \frac{8\pi^2}{CL_\infty^2}\right)\int_0^L (u_{ss})^2 ds.
\end{eqnarray*}
Lemma \ref{lem:4.2.201909} tells us that there exists a positive $t_0$ such that for $t>t_0$, 
$$(7L_0+9\pi)\int_0^L u^2 ds \leq \frac{1}{2}.$$
If $C\geq \max\{\frac{36\pi}{L_\infty}, \frac{32\pi^2}{L_\infty^2}\}$
then $\frac{9\pi}{C L_\infty}+ \frac{8\pi^2}{CL_\infty^2} \leq1/2$ and furthermore
\begin{eqnarray*}
\frac{d}{dt}\int_0^L(u_s)^2 ds \leq -\int_0^L (u_{ss})^2 ds
\end{eqnarray*}
if $t>t_0$. For a $C^1$ and $2\pi$-periodic function $f$ defined on $\mathbb{R}$ such that $\int_0^{2\pi} f dx=0$,
Wirtinger's inequality tells us
$$\int_0^{2\pi} (f^\prime)^2 dx\geq \int_0^{2\pi} f^2 dx.$$
Let $s=\frac{L}{2\pi}\phi$, then $$u_s=u_\phi\frac{2\pi}{L} \ \ \ \mbox{and} 
\ \ \ \ u_{ss}=u_{\phi\phi}\left(\frac{2\pi}{L}\right)^2.$$
By Wirtinger's inequality, we get
\begin{eqnarray*}
\int_0^L (u_{ss})^2 ds = \left(\frac{2\pi}{L}\right)^3\int_0^{2\pi} (u_{\phi\phi})^2 d\phi
\geq \left(\frac{2\pi}{L}\right)^3 \int_0^{2\pi} (u_\phi)^2 d\phi
=\left(\frac{2\pi}{L}\right)^2 \int_0^L (u_s)^2 ds.
\end{eqnarray*}
If $C\geq \max\{\frac{36\pi}{L_\infty}, \frac{32\pi^2}{L_\infty^2}\}$, then
$$\frac{d}{dt}\int_0^L(u_s)^2 ds \leq -\left(\frac{2\pi}{L_0}\right)^2 \int_0^L (u_s)^2 ds, \ \ \ t>t_0.
$$
Thus, for $t>t_0$,
\begin{eqnarray}\label{eq:4.8.201909}
\int_0^L (u_{s})^2 ds \leq \int_0^L (u_{s}(\cdot, t_0))^2 ds\cdot \exp \left[-\left(\frac{2\pi}{L_0}\right)^2t\right].
\end{eqnarray}
For large time, the quantity $\int_0^L (u_{s})^2 ds$ either decays exponentially
or is bounded,
$$\int_0^L (u_{s})^2 ds
\leq \left(1+\max\left\{\frac{36\pi}{L_\infty}, \frac{32\pi^2}{L_\infty^2}\right\}\right)\int_0^L u^2 ds.$$
In either event, it decreases to zero.
\end{proof}

Now we go back to the proof of Theorem \ref{thm:4.1.201909}. It follows from Sobolev's
inequality that $|\kappa-\frac{2\pi}{L}|$ tends to 0 as
$t\rightarrow \infty$. Thus there is a time $T_0>0$ such that $\kappa>0$ for all $t>T_0$.
The evolving curve becomes a convex one. By the result in Gage's original paper \cite{Gage-1986} (see also the note %by Chao-Ling-Wang
\cite{Chao-Ling-Wang}), we know that the curvature of the evolving curve converges to
$\sqrt{\frac{\pi}{A_0}}$ in the $C^\infty$ metric.

In the section 4 of the paper \cite{Gao-Zhang-2019} by the first author and Zhang,
the evolving curve of GAPF is proved to converge to a fixed limiting circle not
escaping to infinity or oscillating indefinitely as $t \rightarrow +\infty$.
So the proof of Theorem \ref{thm:1.1.201909} can be completed by combining Corollary \ref{cor:3.8.201909}
and Theorem \ref{thm:4.1.201909}.

~\\
\textbf{Acknowledgments}
Laiyuan Gao is supported by the National Natural Science Foundation of China (No.11801230).
Shengliang Pan is supported by the National Natural Science Foundation of China (No.12071347).
The authors are very grateful to Professor Michael Gage for his inspiration
on the flow (\ref{eq:1.1.201909}). The first author thanks the Department of Mathematics of UC San Diego
for providing a fantastic research atmosphere because a part of this paper
was finished when he visited Professor Lei Ni.

{\bf Laiyuan Gao}

School of Mathematics and Statistics, Jiangsu Normal University.

No.101, Shanghai Road, Xuzhou City, Jiangsu Province, China.

Email: lygao@jsnu.edu.cn\\

{\bf Shengliang  Pan}

School of Mathematical Sciences, Tongji University.

No.1239, Siping Road, Shanghai, China.

Email: slpan@tongji.edu.cn \\

\end{document}